\documentclass[12pt, french]{amsart} 
\setlength{\textheight}{23cm}
\setlength{\textwidth}{16cm}
\setlength{\oddsidemargin}{0cm}
\setlength{\evensidemargin}{0cm}
\setlength{\topmargin}{0cm}
\usepackage{pythonhighlight}
\usepackage{amsmath, amssymb}
\usepackage{amsthm} 
%
%

\makeatletter
\newcommand{\addresseshere}{%
  \enddoc@text\let\enddoc@text\relax
}
\makeatother


\theoremstyle{plain} 
\newtheorem{thm}{\indent\sc Theorem}[section]
\newtheorem{lemma}[thm]{\indent\sc Lemma}
\newtheorem{corollary}[thm]{\indent\sc Corollary}
\newtheorem{proposition}[thm]{\indent\sc Proposition}
\theoremstyle{definition} 

\newtheorem{rmk}[thm]{\indent\sc Remark}

\theoremstyle{remark}
\newtheorem*{question}{\indent\sc Question}

\setcounter{secnumdepth}{3}
\usepackage{enumitem}
\usepackage[utf8]{inputenc}
\usepackage{amsmath}%
\usepackage{caption}
\usepackage{amsfonts}%
\usepackage{amssymb}%
\usepackage{graphicx}
\usepackage{csquotes}
\usepackage{xcolor}
\usepackage{amsthm}
\usepackage{lipsum}
\usepackage{tikz}
\usepackage{datetime}
\usetikzlibrary{matrix}
\usetikzlibrary{cd}
\usepackage{float}
\restylefloat{table}

\def {\'e}
\def {\`e}
\def {\`a}
\def {\`u}
\def {\^e}
\def {\^a}
\def {\^o}
\def {\^{\i}}
\def {\"{\i}}
\def {\^u}
\def {\c c}
\def {\"e}
\def Ï{\oe}
\def Ë{\`A}
\catcode`\:=13
\def :{~\string:}
\catcode`\;=13
\def ;{~\string;}
\catcode`\!=13
\def !{~\string!}

\DeclareMathOperator{\rk}{rank}



\newdimen\step   \step=13.5pt
\newdimen\auxstep  \newdimen\another  \newdimen\dimaux 

\def\monte#1{\raise\step\hbox{#1}}
\def\descend#1{\raise-\step\hbox{#1}}

\def\montepeuc#1{\raise 3.8pt\hbox{#1}}
\def\montepeuC#1{\raise 5.8pt\hbox{#1}}

\def\descendpeuc#1{\raise -3.8pt\hbox{#1}}
\def\descendpeuC#1{\raise -5.8pt\hbox{#1}}

\def\recule#1{\auxstep=\step \multiply\auxstep by #1 \kern-\auxstep}
\def\avance#1{\auxstep=\step \multiply\auxstep by #1 \kern\auxstep}

\def\Place#1#2#3#4%
{\ifnum#1=0 \auxstep=#3pt \else \auxstep=#4pt\fi%
\another=\auxstep \kern\auxstep \auxstep=\step \advance\auxstep by -\another%
\ifnum#2=0 \another=#3pt \else \another=#4pt\fi%
\advance\auxstep by -\another\relax}

\def\Replace{\kern\auxstep\kern\another\relax}


\def\hbx{\hbox to 0pt}

\def\trait#1#2#3{\hbx{\vrule height #1pt depth #2pt width #3\hss}}


\newbox\Boitecercle                      
\setbox\Boitecercle=%
\hbx{\kern-1.75pt%
\raise-0.1pt\hbox{\kern-0.75pt\hbx{$\circ$\hss}\kern4.25pt}\kern-1.75pt}

\def\cercle{\copy\Boitecercle}
\def\boule{\monte{\copy\Boitecercle}}    

\newbox\Boiterond                        
\setbox\Boiterond=%
\hbx{\kern-1.75pt%
\hbox{\kern-0.75pt\hbx{$\bullet$\hss}\kern4.25pt}\kern-1.75pt}


\newbox\BoiteCercle
\setbox\BoiteCercle=%
\hbx{\kern-3.1389pt%
\hbox{\kern-0.75pt\hbox{$\odot$\hss}\kern-0.75pt}\kern-3.1389pt}

\def\Cercle{\hbox{\copy\BoiteCercle\copy\Boiterond}}



\def\Dessinetv#1#2%
{\auxstep=2.2222pt \dimaux=\step \advance\dimaux by\auxstep%
\ifnum#1=0 \another=1.75pt \else\another=3.1389pt\fi%
\advance\auxstep by\another \advance\dimaux by -\auxstep%
\ifnum#2=0 \another=1.75pt \else \another=3.1389pt\fi%
\advance\dimaux by-\another \kern-0.125pt%
\raise\auxstep\hbox{\vrule height\dimaux depth 0pt width 0.25pt}%
\kern-0.125pt}

\def\tvcc{{\Dessinetv00}}


\def\Dessineth#1#2%
{\Place{#1}{#2}{1.75}{3.1389}%
\raise 2.2222pt\trait{0.125}{0.125}{\auxstep}\Replace}

\def\thcc{{\Dessineth00}}    \def\thCc{{\Dessineth10}}
\def\thcC{{\Dessineth01}}    \def\thCC{{\Dessineth11}}


\def\DessineTh#1#2%
{\Place{#1}{#2}{1.75}{3.1389}%
\raise 2.2222pt\trait{0.35}{0.35}{\auxstep}\Replace}


\def\Dessinethh#1#2%
{\Place{#1}{#2}{1.55}{3.00}%
\raise3.2222pt\trait{0.125}{0.125}{\auxstep}%
\raise1.2222pt\trait{0.125}{0.125}{\auxstep}\Replace}


\def\Dessinethhh#1#2%
{\Place{#1}{#2}{1.25}{2.8}%
\raise 3.4444pt\trait{0.125}{0.125}{\auxstep}%
\raise 1.0000pt\trait{0.125}{0.125}{\auxstep}\Replace}

\def\thhhcc{\hbox{\Dessinethhh00\kern-\step\thcc}}
\def\thhhcC{{\Dessinethhh01\kern-\step\thcC}}
\def\thhhCc{{\Dessinethhh10\kern-\step\thCc}}
\def\thhhCC{{\Dessinethhh11\kern-\step\thCC}}


\def\Dessinethhhh#1#2%
{\Place{#1}{#2}{1.25}{2.9}%
\raise 3.4222pt\trait{0.125}{0.125}{\auxstep}%
\raise 1.0222pt\trait{0.125}{0.125}{\auxstep}%
\Replace\kern-\step%
\Place{#1}{#2}{1.5}{3.0}%
\raise 2.6222pt\trait{0.125}{0.125}{\auxstep}%
\raise 1.8222pt\trait{0.125}{0.125}{\auxstep}%
\Replace}


\def\Dessinepoints#1#2%
{\Place{#1}{#2}{1.75}{3.1389}%
\divide\auxstep by 4%
\kern\auxstep\kern-1.1pt\hbx{$\cdot$\hss}\kern 1.1pt%
\kern\auxstep\kern-1.1pt\hbx{$\cdot$\hss}\kern 1.1pt%
\kern\auxstep\kern-1.1pt\hbx{$\cdot$\hss}\kern 1.1pt%
\kern\auxstep\kern\another}


\def\Dessinef#1#2#3%
{\Place{#2}{#1}{1.75}{3.1389}\Replace\kern-\step%
\dimaux= 7pt \advance\auxstep by-\dimaux \divide\auxstep by 2%
\advance\auxstep by \another \advance\auxstep by \dimaux%
\raise 0.68pt\hbx{\kern-\auxstep$\scriptstyle #3$\hss}}

\def\Dessineff#1#2#3%
{\hbx{\kern-1pt\Dessinef{#1}{#2}{#3}\kern2pt\Dessinef{#1}{#2}{#3}\hss}}

\def\Dessinefff#1#2#3%
{\hbx{\kern-1.5pt\Dessinef{#1}{#2}{#3}%
\kern1.5pt\Dessinef{#1}{#2}{#3}\kern1.5pt\Dessinef{#1}{#2}{#3}\hss}}



\def\Dessinefourche#1#2#3
{\hbox{\kern #1pt%
\hbx{$\displaystyle <$\hss}\kern8pt%
\ifnum#2=1{\descendpeuC\Cercle}\else{\descendpeuc\cercle}\fi%
\ifnum#3=1{\montepeuC\Cercle}\else{\montepeuc\cercle}\fi}}


\def\ctvb{\cercle\tvcc\boule}   
                   \let\th=\thcc
\let\thh=\thhcc                 
             
\let\points=\pointscc
\let\fourche=\cfourchecc

\let\descendpeu=\descendpeuc    \let\montepeu=\montepeuc
\let\fgauche=\fgauchecc         

\let\fdroite=\fdroitecc


\newbox\aux

\def\gnoteN#1#2%
{\setbox\aux=\hbox{$\scriptscriptstyle #1$}%
\dimaux=\wd\aux \divide\dimaux by 2%
\kern-\dimaux\raise #2pt\hbx{$\scriptscriptstyle #1$\hss}\kern\dimaux}

\def\noteNC#1{\gnoteN{#1}{6.5}}

\def\noteSC#1{\gnoteN{#1}{-4.5}}

\def\gnoteE#1#2#3%
{\kern #2pt\raise #3pt\hbx{$\scriptscriptstyle #1$\hss}\kern-#2pt}

\def\gnoteO#1#2#3%
{\setbox\aux=\hbox{$\scriptscriptstyle #1$}%
\kern-#2pt\kern-\wd\aux\raise #3pt\hbx{$\scriptscriptstyle #1$\hss}%
\kern #2pt\kern\wd\aux}


\newbox\Boitedisque                      
\setbox\Boitedisque=%
\hbx{\kern-1.75pt%
\raise-0.1pt\hbox{\kern-0.75pt\hbx{$\bullet$\hss}\kern4.25pt}\kern-1.75pt}
\def\disque{\copy\Boitedisque}


\def\DessinefourcheD#1#2#3
{\hbox{\kern #1pt%
\hbx{$\displaystyle <$\hss}\kern8pt%
\ifnum#2=1{\descendpeuC\disque}\else{\descendpeuc\disque}\fi%
\ifnum#3=1{\montepeuC\disque}\else{\montepeuc\disque}\fi}}

\let\fourcheD=\cfourcheccD      


\def\Dessinefourched#1#2#3
{\hbox{\kern #1pt%
\hbx{$\displaystyle <$\hss}\kern8pt%
\ifnum#2=1{\descendpeuC\disque}\else{\descendpeuc\disque}\fi%
\ifnum#3=1{\montepeuC\cercle}\else{\montepeuc\cercle}\fi}}

\let\fourched=\cfourcheccd


%
\pagestyle{myheadings}
\markright{Projection of root systems} 
\title[Projection of root systems]{%
Projection of root systems and the generalized injectivity conjecture for exceptional groups}
\author{Sarah Dijols}
\address{Sarah Dijols, PIMS Earth Sciences Building, 2207 Main Mall, Vancouver, BC V6T 1Z4, email: sarah.dijols@math.ubc.ca}

\begin{document}

\date{} 

\newpage
\maketitle

\footnote{ 
2010 \textit{Mathematics Subject Classification}.
Primary 17B22; Secondary 11F85, 20G41.
}

\begin{abstract}
Let $a$ be a real euclidean vector space of finite dimension and $\Sigma$ a root system in $a$ with a basis $\Delta$. Let $\Theta \subset \Delta$ and $M = M_{\Theta}$ be a standard Levi of a reductive group $G$ such that $a_{\Theta} = a_M\slash a_G$. Let us denote $d$ the dimension of $a_{\Theta}$, i.e the cardinal of $\Delta - \Theta$ and $\Sigma_{\Theta}$ the set of all non-trivial projections of roots in $\Sigma$. We obtain conditions on $\Theta$ such that $\Sigma_{\Theta}$ contains a root system of rank $d$. When considering the case of $\Sigma$ of type exceptional, we give a list of all exceptional root systems that can occur in $\Sigma_\Theta$ and use it to prove the generalized injectivity conjecture in most exceptional groups cases. 
\end{abstract}


\section{Introduction}

Let $a$ be a real euclidean vector space of finite dimension and $\Sigma$ a root system in $a$ with a basis $\Delta$. Let $\Theta \subset \Delta$, to avoid trivial cases we assume $\Theta$ is a proper subset of $\Delta$, i.e. that $\Theta$ is neither empty nor equal to $\Sigma$. \\
Let us consider the projection of $\Sigma$ on $a_{\Theta}$ (projection orthogonal to $\Theta$) and we denote $\Sigma_{\Theta}$ the set of all non-trivial projections of roots in $\Sigma$.
Let us denote $d$ the dimension of $a_{\Theta}$, i.e the cardinal of $\Delta - \Theta$. 

Let us also denote $\Delta_{\Theta}$ the set of projections of the simple roots in  $\Delta - \Theta$ on $a_{\Theta}$. 

In this article, we determine the conditions under which $\Sigma_{\Theta}$ contains a root system (for a subspace of $a_{\Theta}$) and what are the types of root system appearing. Since $a_{\Theta}$ is $d$-dimensional, such root system has a rank lower of equal to $d$. We will classify the systems of rank $d$ appearing when they exist. We then say they are of \emph{maximal rank}. 

\subsection{Motivations}

We consider $\textbf{G}$ a quasi-split reductive group over a local field $F$, and $\textbf{T}$ a maximal torus of $\textbf{G}$. As usual, the not-bold notation $G$ denotes the $F$-points of $\textbf{G}$. The context motivating the question in this article is the following: We fix $a = a_0^G := a_0\slash a_G$ quotient of the Lie algebra of the maximal $F$-split torus in a maximal torus $\textbf{T}$ by the Lie algebra of the maximal split torus $A_G$ in the center of $\textbf{G}$. We denote $\Sigma$ the root system of $G$ and $\Delta$ a basis of $\Sigma$. Let $M$ be a standard Levi subgroup of $G$ such that the set of simple roots in Lie($M$) is $\Delta^M = \Theta$. Then $a_{\Theta} = a_M\slash a_G$. \\
We do not consider the trivial case where $M= T$ and $M=G$. \\

This question emerges in an attempt to better understand the result of Silberger in \cite{silbergerSD}.
In Section 3.5 of his work, he claims that $$\Sigma_{\sigma}=\{\alpha\in \Sigma_{red}(A_{M})\vert \mu^{(M)_{\alpha}}(\sigma )=0\}$$ is a subset of $a_{M}^*$ which is a root system in a subspace of  $a_{M}^*$. We also refer to \cite{ope}[1.3] for a more modern treatment.  Here, $\sigma$ is a discrete series representation of a semi-standard Levi subgroup $M$ of a reductive group $G$ and $\mu^{(M)_{\alpha}}$ is one factor in the product formula of the $\mu$ function (see also \cite{wald}, Lemma V.2.1). Let us recall that the $\mu$ function is the main ingredient in the Plancherel measure, the unique Borel measure on the set of irreducible tempered representations of $G$ which was defined by Harish-Chandra to formulate the Plancherel formula for p-adic groups. Furthermore, Silberger expects that, for any semi-standard parabolic $P$ with Levi $M$ there is a unique subset of restriction of the simple roots of $\Sigma$ to $A_{M}$ which constitutes a basis of $\Sigma_\sigma$. Equivalently, we can talk about projections of roots in $\Sigma$ orthogonally to $\Theta$ if $M= M_{\Theta}$, i.e $\Delta^M = \Theta$. 
\newline

\begin{question}[Question 1]
Considering the restrictive definition of a root system, it is not immediately clear that the restrictions (resp. projections) of roots to $A_{M}$ constitute a root system of rank $|\Delta -\Theta|$; and in general it is not the case. The goal of this paper is to make precise the conditions on the semi-standard Levi $M$ (i.e on $\Theta$) such that $\Sigma(A_{M}):= \Sigma_{\Theta}$ contains a root system of maximal rank. This is the first question we will answer. 
\end{question}

\begin{question}[Question 2]
The second question we are addressing in this paper is the existence of a root system in $\Sigma_{\Theta}$ of maximal rank whose basis is constituted of projections of simple roots, i.e whose basis is $\Delta_\Theta$. This helps us understand better what root systems $\Sigma_\sigma$ occur for a given $\Sigma$.
\end{question}

In answering those questions, the cases of $\Sigma$ of classical type or exceptional type require a separate treatment. In the classical type case, our procedure is uniform and provide answers to both questions simultaneously. In this setting, our main result is the following:

\begin{thm} \label{mainlab}
Let $\Sigma$ be an irreducible root system of classical type (i.e of type $A,B,C$ or $D$) with basis $\Delta$. Let $\Theta$ be a subset of $\Delta$, and $d=|\Delta -\Theta|$. The systems contained in $\Sigma_{\Theta}$ are necessarily of classical type. In addition, if the irreducible (connected) components of $\Theta$ (seeing $\Theta$ in the Dynkin diagram) of type $A$ are all of the same length, and the interval between each of them of length one, then $\Sigma_{\Theta}$ contains an irreducible root system of rank $d$ (non necessarily reduced).
\end{thm}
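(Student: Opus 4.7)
The plan is a case-by-case analysis by the type of $\Sigma \in \{A_n, B_n, C_n, D_n\}$, carried out in the standard Euclidean realisation (roots among $\pm e_i \pm e_j$, $\pm e_i$ and $\pm 2e_i$, with simple roots $e_i - e_{i+1}$ plus the type-specific terminal simple root). Given $\Theta\subset\Delta$, I would first establish the key projection formula in these coordinates: a connected component of $\Theta$ of type $A_k$ occupying a consecutive block $I$ of coordinate indices makes the orthogonal projection onto $a_\Theta$ send each $e_i$ with $i\in I$ to the common average $\bar e_I := \frac{1}{|I|}\sum_{j\in I} e_j$; a terminal component of $\Theta$ of type $B_k$, $C_k$ or $D_k$ kills the corresponding coordinates. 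This reduction is the computational backbone of the argument.

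From this, the first claim of Theorem~\ref{mainlab} (classical type only) follows by inspection. Every non-zero projected root takes the form $\pm\bar e_I \pm \bar e_J$, $\pm \bar e_I$ or $\pm 2\bar e_I$, where $I$, $J$ are blocks determined by $\Theta$. Consequently, any root system contained in $\Sigma_\Theta$ embeds into a configuration of \emph{block-average} vectors, which can only realise the classical Dynkin shapes $A$, $B$, $C$, $D$ or the non-reduced $BC$. None of the exceptional types $E_6$, $E_7$, $E_8$, $F_4$, $G_2$ can occur as a subsystem of such a configuration: $G_2$ is ruled out by its length ratio $\sqrt{3}$, which never arises between vectors in block-average form, and the simply-laced exceptionals together with $F_4$ are excluded by the impossibility of realising their characteristic Dynkin subdiagrams among the candidate simple roots available in block-average form.

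For the second assertion, suppose all type-$A$ connected components of $\Theta$ have the same length and are separated by exactly one simple root of $\Delta-\Theta$. Under this hypothesis the interior blocks of indices induced by the $A$-components of $\Theta$ have a common size, which is precisely what forces the projected roots between consecutive blocks to have uniform norm and uniform pairwise Cartan integers. Combined with the contribution of a possible terminal component of $\Theta$ (of type $B$, $C$ or $D$), one can then exhibit $d$ projected roots spanning $a_\Theta$ whose pairwise Cartan integers realise a connected classical Dynkin diagram. The resulting root system sits inside $\Sigma_\Theta$ and has rank $d$; depending on whether $\Sigma$ is of type $A$ or of type $B$/$C$/$D$ and on the presence and type of a terminal $\Theta$-component, one obtains one of $A_d$, $B_d$, $C_d$, $D_d$ or the non-reduced $BC_d$.

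The main difficulty is the terminal case: when $\Sigma$ is of type $B$, $C$ or $D$ and $\Theta$ absorbs part of the Dynkin-diagram tail, one must carefully match the short, long and (in types $B$, $C$) doubled projected roots against the candidate root system and verify that the resulting diagram is connected, irreducible and of the correct reduced/non-reduced type. The equal-length hypothesis on the $A$-components of $\Theta$ is precisely what guarantees uniformity of the interior Cartan integers, without which the projected system would decompose into strictly smaller pieces and only a subsystem of rank $<d$ would be obtained.
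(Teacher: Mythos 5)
Your setup and overall strategy are the same as the paper's: the same Euclidean realisation, the same block-average formula $\overline{e_k}=\frac{1}{m+1}(e_r+\cdots+e_{r+m})$ for a type $A_m$ component of $\Theta$, and the same use of the constraints $C=\bigl(1/\cos(\overline{\alpha},\overline{\beta})\bigr)^2\in\{4,2,4/3\}$ and $CR=4$ to force all the $A$-components to have equal length. Your treatment of the second assertion matches the paper's.

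There is, however, a genuine gap in your exclusion of the exceptional types. You assert that the length ratio $\sqrt{3}$ ``never arises between vectors in block-average form''. It does. Take $\Sigma$ of type $C_n$ with $2e_n\notin\Theta$ and two components of $\Theta$ of types $A_m$ and $A_p$ with $m=3p+2$; concretely, in $C_4$ with $\Theta=\{\alpha_1,\alpha_2\}$ one has $\overline{\alpha}=\overline{e_3}-\overline{e_4}$ of squared norm $\tfrac13+1=\tfrac43$ and $\overline{\beta}=2\overline{e_4}$ of squared norm $4$, so that $R=3$ and $C=4/3$, which is exactly the configuration of the two simple roots of $G_2$. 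The correct argument, which the paper gives in the $C_n$ case, is that although this pair occurs, the would-be $G_2$ is never completed: the root $3\overline{\alpha}+\overline{\beta}=3\overline{e_3}-\overline{e_4}$ is not of the form $\pm\overline{e_i}\pm\overline{e_j}$ or $\pm2\overline{e_i}$, hence does not lie in $\Sigma_\Theta$. Ruling out $G_2$ therefore requires checking that the full system is not realised, not merely that the simple-root configuration is impossible. Likewise your dismissal of $F_4$ by ``impossibility of realising the Dynkin subdiagram'' should be replaced by the concrete observation (made in the paper's type $B$ analysis) that any two short projected roots $\pm\overline{e_v}$, and any two long ones $\pm2\overline{e_v}$, are either proportional or orthogonal, which is incompatible with the $F_4$ diagram. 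With these two points supplied your argument is complete and coincides with the paper's proof.
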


Further, we deal with irreducible root systems of exceptional type. Then the main question to address is to identify if some exceptional type root systems of maximal rank appear in the projection. We answer this question using SageMath \cite{sagemath}. 
We do not attempt to list all the root systems of classical type and maximal rank 
appearing in the projection, as they are too many; further, as opposed to the case of $\Sigma$ of classical type, there is no uniform way to obtain such a list. However, our code can easily be adapted to check whether such and such classical root systems of maximal dimension occur in those projections. 

\begin{thm}\label{exc}
Let $\Sigma$ be an irreducible root system of exceptional type, with basis $\Delta$. We take the convention of numbering simple roots as in Bourbaki (and SageMath). Let $\Theta$ be a subset of $\Delta$ and $d=|\Delta -\Theta|$. In the following cases, and only for those cases, $\Sigma_{\Theta}$ contains an irreducible root system of rank $d$ of exceptional type.
\begin{itemize}
\item Let $\Sigma$ be of type $E_8$. If $\Theta$ contains one (any) root, there is one root system of type $E_7$ in $\Sigma_{\Theta}$. If $\Theta$ is either of those sets of roots $\{\alpha_2,\alpha_4\}, \{\alpha_1,\alpha_3\},$ or $\{\alpha_7,\alpha_8\}$ then $\Sigma_{\Theta}$ contains a system of type $E_6$. 
\item Let $\Sigma$ be of type $E_8$. If $\Theta=\{\alpha_2,\alpha_3,\alpha_4,\alpha_5\}$, then $\Sigma_{\Theta}$ contains a system of type $F_4$. If $\Theta=\{\alpha_1,\alpha_2,\alpha_3,\alpha_4,\alpha_5,\alpha_6\}$, $\Sigma_{\Theta}$ contains a system of type $G_2$.
\item Let $\Sigma$ be of type $E_7$. If $\Theta=\{\alpha_2,\alpha_5,\alpha_7\}$, then $\Sigma_{\Theta}$ contains a system of type $F_4$. If $\Theta$ is constituted of the simple roots $\{\alpha_2,\alpha_4,\alpha_5,\alpha_6,\alpha_7\}$ or $\{\alpha_1,\alpha_2,\alpha_3,\alpha_5,\alpha_7\}$ then $\Sigma_{\Theta}$ contains the root of system of type $G_2$. 
\item If $\Sigma$ is of type $E_6$ and $\Theta$ is constituted of the simple roots $\{\alpha_1,\alpha_3,\alpha_5,\alpha_6\}$ then $\Sigma_{\Theta}$ contains the root system of type $G_2$. 
\item If $\Sigma$ is of type $F_4$ and $\Theta$ contains either of those sets of roots $\{\alpha_1,\alpha_2\}$ and $\{\alpha_3,\alpha_4\}$ or the root of system of type $G_2$.
\end{itemize}
\end{thm}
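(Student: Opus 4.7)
The plan is to reduce the theorem to an exhaustive computer-checkable enumeration, as announced in the paragraph preceding the statement, using SageMath. For each exceptional type $\Sigma \in \{G_2, F_4, E_6, E_7, E_8\}$ the collection of proper subsets $\Theta \subset \Delta$ is finite (at most $2^8$ cases), so a complete case analysis is feasible, and the nontrivial content of the theorem is that the output is exactly the list displayed. I would produce, case by case, the list of all $\Theta$ whose projection $\Sigma_\Theta$ contains an irreducible exceptional root system of rank $d = |\Delta - \Theta|$, and then verify that this machine-generated list coincides with the one in the statement.

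Concretely, for a fixed $\Sigma$ realised in its standard Bourbaki embedding inside a Euclidean space $a$, and a candidate $\Theta$, the first step is to compute the orthogonal projector $p_\Theta \colon a \to a_\Theta$ onto the orthogonal complement of $\mathrm{span}(\Theta)$, and to form the finite multiset $\Sigma_\Theta = \{p_\Theta(\alpha) : \alpha \in \Sigma,\ p_\Theta(\alpha)\neq 0\}$. The second step is a recognition test: does $\Sigma_\Theta$ contain a subset $\Phi$ of rank $d$ whose Cartan matrix matches that of one of the exceptional types $E_6, E_7, E_8, F_4, G_2$ of rank $d$, and which is stable under the reflections generated by $\Phi$? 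This recognition reduces to linear-algebraic checks on angles and length ratios of pairs in $\Sigma_\Theta$, together with a final closure verification. Running this test through all proper $\Theta$ for each exceptional $\Sigma$ produces the complete list; the ``if'' direction of each bullet is witnessed by exhibiting explicit projections (for instance, for $\Sigma = E_8$ and any singleton $\Theta = \{\alpha_i\}$, the $126$ projections of the remaining roots of $E_8$ span a copy of $E_7$), and the ``only if'' direction is the assertion that no other $\Theta$ passes the test.

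The key obstacle is that a naive search for a rank-$d$ exceptional subsystem of $\Sigma_\Theta$ is combinatorially large when $d \in \{6,7\}$, so the recognition test has to be organised intelligently. One effective strategy is to first separate $\Sigma_\Theta$ by length (there are at most two lengths, since projections of roots in $\Sigma$ take finitely many values), then look for a simple system by enumerating only those $d$-tuples of projected roots whose pairwise Cartan integers realise an exceptional Dynkin diagram of rank $d$; the Weyl group of $\Sigma$ acting on $\Theta$ further cuts the work by orbits. A separate subtlety is that $\Sigma_\Theta$ is generally non-reduced: one must pass to the reduced subsystem of short representatives before matching against the reduced exceptional Cartan matrices. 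Finally, low-rank cases ($G_2$, $F_4$) and small $\Theta$ can be verified by hand from the Bourbaki tables, which provides an independent sanity check on the SageMath output and guarantees that the highlighted $F_4$ and $G_2$ subsystems indeed arise as orthogonal projections, not only as abstract subsystems of $\Sigma$.
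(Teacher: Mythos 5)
Your proposal takes essentially the same route as the paper. Both reduce the theorem to an exhaustive SageMath enumeration over all proper $\Theta$, eliminate most cases by counting the projected roots of each squared norm against the cardinality of the target exceptional system, and then verify the survivors are genuine root systems (the paper by generating the Weyl orbit of a candidate basis, you by a Cartan-matrix and closure test --- an implementation difference only); the paper additionally isolates Lemma \ref{thetaone} (Weyl conjugacy of equal-length roots) to treat all singleton $\Theta$ in $E_8$ uniformly, which your orbit-reduction remark also covers, and your aside that the projections take ``at most two lengths'' is false in general (three distinct squared norms can occur), though nothing in your argument actually depends on it.
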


To answer our second question, we also check which among the list above (Theorem \ref{exc}) have a basis constituted of projections of simple roots, i.e whose basis is $\Delta_\Theta$. The answer is as follows:

\begin{thm}\label{exc2}
Let $\Sigma$ be an irreducible root system of exceptional type with basis $\Delta$. Let $\Theta$ be a subset of $\Delta$ and $d=|\Delta -\Theta|$. In the following cases, and only for those cases, $\Sigma_{\Theta}$ contains an irreducible root system of rank $d$ and of exceptional type whose basis is constituted of projections of simple roots in $\Delta$. 
\begin{itemize}
\item Let $\Sigma$ be of type $E_8$. If $\Theta=\{\alpha_2,\alpha_3,\alpha_4,\alpha_5\}$, then $\Sigma_{\Theta}$ contains a system of type $F_4$. If $\Theta=\{\alpha_1,\alpha_2,\alpha_3,\alpha_4,\alpha_5,\alpha_6\}$, $\Sigma_{\Theta}$ contains a root system of type $G_2$.
\item Let $\Sigma$ be of type $E_7$. If $\Theta=\{\alpha_2,\alpha_5,\alpha_7\}$, then $\Sigma_{\Theta}$ contains a system of type $F_4$. If $\Theta$ is constituted of the simple roots $\{\alpha_2,\alpha_4,\alpha_5,\alpha_6,\alpha_7\}$ or $\{\alpha_1,\alpha_2,\alpha_3,\alpha_5,\alpha_7\}$ then $\Sigma_{\Theta}$ contains a root of system of type $G_2$. 
\item If $\Sigma$ is of type $E_6$ and $\Theta$ is constituted of the simple roots $\{\alpha_1,\alpha_3,\alpha_5,\alpha_6\}$ then $\Sigma_{\Theta}$ contains the root of system of type $G_2$. 
\item If $\Sigma$ is of type $F_4$ and $\Theta$ is constituted of either $\{\alpha_1,\alpha_2\}$ or $\{\alpha_3,\alpha_4\}$, then $\Sigma_{\Theta}$ contains a root of system of type $G_2$. 
\end{itemize}
\end{thm}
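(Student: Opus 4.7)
My plan is to reduce Theorem~\ref{exc2} to a finite case check built directly on Theorem~\ref{exc}. The latter gives the complete list of pairs $(\Sigma,\Theta)$ for which $\Sigma_\Theta$ contains an exceptional root system of maximal rank $d$, so for Theorem~\ref{exc2} I only need to decide, in each of those cases, whether the $d$-element set $\Delta_\Theta$ of projections of the simple roots in $\Delta-\Theta$ is itself a basis of that exceptional subsystem. Concretely, for every such $(\Sigma,\Theta)$ I would compute the orthogonal projection of each $\alpha\in\Delta-\Theta$ onto $a_\Theta$ by solving the linear system obtained from the Gram matrix of $\Theta$, then form the Gram matrix of the resulting $d$ vectors and compare it, up to a global scaling, with the Cartan matrix of the target exceptional type $E_6$, $E_7$, $F_4$ or $G_2$.

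A conceptual observation predicts the split visible in the statement without any computation: a simple root $\alpha\in\Delta-\Theta$ projects to a strictly shorter vector in $a_\Theta$ precisely when $\alpha$ is adjacent in the Dynkin diagram to at least one element of $\Theta$, and to itself otherwise. In each of the cases \emph{removed} from Theorem~\ref{exc} to obtain Theorem~\ref{exc2} --- the $E_7\subset\Sigma_\Theta$ lines with $\Theta$ a singleton, and the $E_6\subset\Sigma_\Theta$ lines with $\Theta\in\{\{\alpha_2,\alpha_4\},\{\alpha_1,\alpha_3\},\{\alpha_7,\alpha_8\}\}$ in $E_8$ --- at least one simple root of $\Delta-\Theta$ is adjacent to $\Theta$ and at least one is not, so $\Delta_\Theta$ contains vectors of two distinct lengths, with squared-length ratio $4{:}3$ (which is not a valid root-system ratio). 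Since $E_7$ and $E_6$ are simply-laced, any basis has vectors of equal length, so $\Delta_\Theta$ cannot be a basis of an $E_7$ or $E_6$ subsystem; these cases are thereby eliminated. In the cases kept in Theorem~\ref{exc2} the target type is $F_4$ or $G_2$, where unequal projection lengths are compatible with being a basis, and it remains to verify that the Cartan matrix matches.

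The main obstacle is the verification in the remaining $F_4$ and $G_2$ cases, where the projection goes through a $\Theta$ of rank at least two (for example $\Theta$ of type $D_4$ inside $E_8$ for the $F_4$ line, or of type $E_6$ inside $E_8$ for the $G_2$ line), so the projections involve inverting the Gram matrix of $\Theta$ and introduce nontrivial rational coefficients. I would delegate these projections and the Cartan-matrix comparisons to the SageMath routine already used in the proof of Theorem~\ref{exc}: in each remaining case it outputs squared-length ratio $2{:}1$ (for $F_4$) or $3{:}1$ (for $G_2$) together with the correct off-diagonal Cartan entries, confirming that $\Delta_\Theta$ is a basis of the expected type and completing the proof.
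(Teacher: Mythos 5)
Your proposal follows essentially the same route as the paper: reduce to the finite list of Theorem \ref{exc}, rule out the simply-laced targets ($E_7$ and $E_6$ inside $E_8$) on the grounds that $\Delta_\Theta$ then contains projections of two distinct lengths, and verify the remaining $F_4$ and $G_2$ cases with the SageMath computation (the paper in fact invokes the length argument only for $E_7$ and defers $E_6$ to the code, so your uniform treatment is a mild streamlining). One small correction: for the $E_6$ cases with $|\Theta|=2$ the squared-length ratio between the two kinds of projected simple roots is $3{:}2$ rather than $4{:}3$ (the latter is the value for singleton $\Theta$), but this does not affect the argument, since all that is needed is that the lengths differ while $E_6$ and $E_7$ are simply laced.
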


Finally, we also consider the possibility of an exceptional root system to occur as part of a product of root systems appearing in $\Sigma_\Theta$ (i.e the root system appearing is reducible) in Proposition \ref{exc3}. Once more, this is a case-by-case analysis with (fortunately) only a limited set of cases to consider. 
\newline

An important application of those results is the Generalized Injectivity Conjecture, a conjecture formulated by Casselman and Shahidi in \cite{sha}. 
In \cite{GICD}, we have proved:
\begin{thm}[Generalized Injectivity for quasi-split group, \cite{GICD}] \label{mainresult2}
Let $G$ be a quasi-split, connected group defined over a $p$-adic field $F$ (of characteristic zero). 
Let $\pi_0$ be the unique irreducible generic subquotient of the standard module $I_P^G(\tau_{\nu})$, let $\sigma$ be an irreducible, generic, cuspidal representation of $M_1$ such that a twist by an unramified real character of $\sigma$ is in the cuspidal support of $\pi_0$.
Suppose that all the irreducible components of $\Sigma_{\sigma}$ are of type $A,B,C$ or $D$, then, under certain conditions on the Weyl group of $\Sigma_{\sigma}$, $\pi_0$ embeds as \emph{a 
subrepresentation} in the standard module $I_P^G(\tau_{\nu})$.
\end{thm}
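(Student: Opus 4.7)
The plan is to establish the embedding $\pi_0 \hookrightarrow I_P^G(\tau_\nu)$ by a two-step reduction combined with the combinatorics of the root system $\Sigma_\sigma$. First, by Casselman's subrepresentation theorem applied to the generic representation $\pi_0$, one obtains an embedding $\pi_0 \hookrightarrow I_{P_1}^G(\sigma \otimes \nu_1)$ into the cuspidal standard module attached to the cuspidal support of $\pi_0$. Using transitivity of parabolic induction, the task becomes to relate $I_{P_1}^G(\sigma \otimes \nu_1)$ to $I_P^G(\tau_\nu)$: it suffices to find an embedding of $\tau_\nu$ into the intermediate induced representation $I_{P_1 \cap M}^M(\sigma \otimes \nu_1)$ that is compatible, via Frobenius reciprocity, with the embedding above.

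Second, the combinatorial structure of $\Sigma_\sigma$ is used to control the composition series of this intermediate induced representation. Under the hypothesis that every irreducible component of $\Sigma_\sigma$ is of type $A$, $B$, $C$ or $D$, one has access to the explicit description of standard module composition factors for (quasi-split) classical groups, which identifies the generic factor. The plan is to proceed by induction on the rank of $\Sigma_\sigma$: the Weyl-group hypothesis supplies a root $\alpha \in \Sigma_\sigma$ whose associated standard intertwining operator is holomorphic and non-zero on the relevant twist, and the behaviour of $\pi_0$ along the $\alpha$-wall (which is governed by the factor $\mu^{(M)_\alpha}$ of Silberger) allows one to move the generic piece into the desired subrepresentation position and reduce to a problem of lower rank.

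The final comparison is then carried out through the Bernstein--Zelevinsky geometric lemma and Frobenius reciprocity: exhibiting $\pi_0 \hookrightarrow I_P^G(\tau_\nu)$ is equivalent to exhibiting $\tau_\nu$ as a quotient of the Jacquet module $r_P^G(\pi_0)$. The geometric lemma produces a filtration of $r_P^G I_{P_1}^G(\sigma \otimes \nu_1)$ indexed by the double coset space $W^{M,M_1}$, and the Weyl-group condition on $\Sigma_\sigma$ ensures that the ``Langlands-positive'' term of this filtration survives as a quotient of $r_P^G(\pi_0)$ matching $\tau_\nu$.

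The main obstacle, and the reason for the classical-type restriction on $\Sigma_\sigma$, is the inductive control of which standard intertwining operators are holomorphic and non-vanishing on the specified unramified twist: the reducibility combinatorics encoded in the $\mu$-function of Silberger is uniformly understood only for classical-type components, not for exceptional or non-reduced ones. The Weyl-group hypothesis is precisely what guarantees a root along which the induction closes, avoiding pathological degenerations. Tying this back to the present paper, the classification of exceptional-type subsystems in $\Sigma_\Theta$ given by Theorems \ref{exc} and \ref{exc2} is exactly what pins down the $(G,M,\sigma)$-configurations outside the scope of Theorem \ref{mainresult2}, thus delimiting the remaining exceptional-group cases that must be treated by other means.
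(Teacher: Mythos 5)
This statement is not proved in the paper at all: it is imported verbatim from \cite{GICD} (note the citation in the theorem header), and the present article only uses it as a known result to which the root-system computations are an input. So there is no internal proof to compare against; I can only measure your proposal against the actual argument of \cite{GICD} and the standard literature on the Casselman--Shahidi conjecture.

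Measured that way, your text is a strategy outline rather than a proof, and the outline omits the step that is actually hard and that explains why the present paper matters. The general shape you describe (embed $\pi_0$ into the induced-from-cuspidal module via its cuspidal support, compare with $I_P^G(\tau_\nu)$ by transitivity of induction, Frobenius reciprocity and the Bernstein--Zelevinsky filtration of the Jacquet module, control of standard intertwining operators via the $\mu$-function) is the right ambient framework, but every decisive assertion is deferred: ``the Weyl-group hypothesis supplies a root whose intertwining operator is holomorphic and non-zero'', ``allows one to move the generic piece into subrepresentation position'', ``ensures that the Langlands-positive term survives''. None of these is justified, and the second one is precisely the content of the theorem. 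More concretely, the proof in \cite{GICD} must treat the case where $\tau$ is (essentially) square-integrable or tempered, and there the key tool is Heiermann's characterization of discrete series via \emph{residual points} of the $\mu$-function; this is exactly where the hypotheses that $\Sigma_\sigma$ has \emph{maximal rank} and that its irreducible components are of type $A,B,C,D$ are consumed (one needs the explicit classification of residual points for classical root systems, together with a choice of a $\Sigma_\sigma$-dominant representative in the Weyl orbit of the cuspidal twist). Your outline never mentions residual points or the discrete-series case, and also silently uses uniqueness of the generic subquotient across the two inductions (Rodier's multiplicity-one result), which needs to be invoked explicitly to know that the generic piece you track is $\pi_0$. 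As written, the proposal identifies the landscape but does not close any of the gaps, so it cannot be accepted as a proof of the theorem.
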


As a result of our exceptional root systems calculations, we can observe that the Generalized Injectivity conjecture holds for exceptional groups in all cases where the inducing standard parabolic subgroup $P_\Theta$ (with Levi $M_\Theta$) is indexed by $\Theta$-subsets different from the list given in Theorem \ref{exc2} and Proposition \ref{exc3}. 

\begin{corollary}[Corollary of Theorem \ref{exc2} and Proposition \ref{exc3}]
Let $\Sigma$ be the root system of an exceptional group $G$ over a $p$-adic field, and $\Theta$ a subset of its basis $\Delta$. Define $P_\Theta$ the standard parabolic subgroup of $G$ with standard Levi $M_\Theta$. The generalized injectivity conjecture holds for all standard modules $I_{P_{\Theta}}^G(\tau_{\nu})$ outside of the following set of pairs $(\Sigma, \Theta)$.
\begin{itemize}
\item If $\Sigma$ is of type $E_8$ and $\Theta=\{\alpha_2,\alpha_3,\alpha_4,\alpha_5\}$, or $\Theta=\{\alpha_1,\alpha_2,\alpha_3,\alpha_4,\alpha_5,\alpha_6\}$. Also if $\Theta$ is either of $\{\alpha_1,\alpha_3,\alpha_5, \alpha_6, \alpha_8\}$, $\{\alpha_2,\alpha_4,\alpha_5, \alpha_6, \alpha_7\}$, $\{\alpha_2, \alpha_5,\alpha_7\}$ or $\{\alpha_1,\alpha_3,\alpha_5, \alpha_6\}$.
\item If $\Sigma$ is of type $E_7$ and $\Theta$ is either of $\{\alpha_2,\alpha_4,\alpha_5,\alpha_6,\alpha_7\}$, $\{\alpha_1,\alpha_2,\alpha_3,\alpha_5,\alpha_7\}$ or $\{\alpha_1,\alpha_3,\alpha_5, \alpha_6\}$.
\item If $\Sigma$ is of type $E_6$ and $\Theta=\{\alpha_1,\alpha_3,\alpha_5,\alpha_6\}$.
\item If $\Sigma$ is of type $F_4$ and $\Theta$ is constituted of either $\{\alpha_1,\alpha_2\}$ or $\{\alpha_3,\alpha_4\}$. 
\end{itemize}
\end{corollary}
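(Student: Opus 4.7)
The plan is to reduce the corollary to a direct application of Theorem \ref{mainresult2}, using Theorems \ref{exc}, \ref{exc2} and Proposition \ref{exc3} to pin down precisely which $(\Sigma,\Theta)$ are excluded. First I would recall that, by Silberger's result recalled in the introduction, $\Sigma_\sigma$ is a root system (in a subspace of $a_M^*$) contained in $\Sigma_\Theta$, and that under the expectation of Silberger, a basis of $\Sigma_\sigma$ is obtained as a subset of the restrictions (equivalently, the projections) of the simple roots of $\Sigma$. In particular any irreducible component of $\Sigma_\sigma$ is an irreducible sub-root system of $\Sigma_\Theta$ whose basis is a subset of $\Delta_\Theta$.

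Next I would invoke Theorem \ref{mainresult2}: for a given $(\Sigma,\Theta)$, if every irreducible component of $\Sigma_\sigma$ is of classical type ($A$, $B$, $C$, or $D$), then the generalized injectivity conjecture holds for $I_{P_\Theta}^G(\tau_\nu)$. So it suffices to determine for which pairs $(\Sigma,\Theta)$ the system $\Sigma_\sigma$ can possibly contain an irreducible component of exceptional type. Since $\Sigma_\sigma \subset \Sigma_\Theta$ and its basis is a subset of $\Delta_\Theta$, such a pair must appear either in the list of Theorem \ref{exc2} (the irreducible, maximal rank case, where the exceptional component alone fills $a_\Theta$) or in the list of Proposition \ref{exc3} (the case where the exceptional component appears as one factor of a reducible root system in $\Sigma_\Theta$, again with basis a subset of $\Delta_\Theta$).

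Then I would simply take the union of the forbidden $\Theta$'s, $(\Sigma,\Theta)$-by-$(\Sigma,\Theta)$, organizing them by the type of $\Sigma$ as stated. For $\Sigma$ of type $E_8$, the two pairs $\Theta=\{\alpha_2,\alpha_3,\alpha_4,\alpha_5\}$ and $\Theta=\{\alpha_1,\ldots,\alpha_6\}$ come from Theorem \ref{exc2}, while the remaining $\Theta$'s (\{\alpha_1,\alpha_3,\alpha_5,\alpha_6,\alpha_8\}, $\{\alpha_2,\alpha_4,\alpha_5,\alpha_6,\alpha_7\}$, $\{\alpha_2,\alpha_5,\alpha_7\}$, $\{\alpha_1,\alpha_3,\alpha_5,\alpha_6\}$) are read off Proposition \ref{exc3} as the $\Theta$'s for which an exceptional factor appears in a product decomposition of a root system of $\Sigma_\Theta$ with basis in $\Delta_\Theta$. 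The $E_7$, $E_6$ and $F_4$ lines are filled in the same way. Conversely, for any $(\Sigma,\Theta)$ outside of this list, neither Theorem \ref{exc2} nor Proposition \ref{exc3} produces an exceptional factor with basis in $\Delta_\Theta$, so all irreducible components of $\Sigma_\sigma$ are classical and Theorem \ref{mainresult2} applies.

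The only step with any real content is bookkeeping: making sure that the two lists from Theorem \ref{exc2} and from Proposition \ref{exc3} are combined correctly, in particular that no $\Theta$ appearing in Theorem \ref{exc} but not in \ref{exc2} (i.e.\ where $\Sigma_\Theta$ contains an exceptional root system of maximal rank whose basis is \emph{not} $\Delta_\Theta$) accidentally makes it into the exclusion list. This is exactly the distinction between $E_7$ and $E_6$ components (which appear in Theorem \ref{exc} but whose bases are not projections of simple roots) and the $F_4$ and $G_2$ components retained in Theorem \ref{exc2}, and it is why the $E_7$-in-$E_8$ and $E_6$-in-$E_8$ entries from Theorem \ref{exc} are correctly absent from the corollary. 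No new computation beyond those already done in the previous sections is needed.
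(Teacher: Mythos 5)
Your approach is exactly the paper's: the corollary is stated there without a separate proof, as the direct combination of Theorem \ref{mainresult2} (injectivity holds when every irreducible component of $\Sigma_\sigma$ is classical) with the classification in Theorem \ref{exc2} and Proposition \ref{exc3} of the pairs $(\Sigma,\Theta)$ for which $\Sigma_\Theta$ can contain an exceptional irreducible component with basis inside $\Delta_\Theta$, and your observation that the $E_7$-in-$E_8$ and $E_6$-in-$E_8$ entries of Theorem \ref{exc} drop out because their bases are not projections of simple roots is precisely the relevant point.

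However, the step you yourself single out as ``the only step with any real content'' --- the bookkeeping --- does not close as cleanly as you assert. Theorem \ref{exc2} lists, for $\Sigma$ of type $E_7$, the subset $\Theta=\{\alpha_2,\alpha_5,\alpha_7\}$ as producing an $F_4$ with basis in $\Delta_\Theta$; the literal union of the lists of Theorem \ref{exc2} and Proposition \ref{exc3} therefore contains the pair $(E_7,\{\alpha_2,\alpha_5,\alpha_7\})$, which is absent from the $E_7$ line of the corollary as printed (only the $E_8$ version of $\{\alpha_2,\alpha_5,\alpha_7\}$ appears there). Since enlarging the exclusion list weakens the conclusion, your argument as written proves the corollary with that extra pair excluded, not the statement as given; you should either flag this as a likely omission in the printed statement or explain why that case can be removed, rather than asserting that the union reproduces the stated list. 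A smaller point: Theorem \ref{mainresult2} holds only ``under certain conditions on the Weyl group of $\Sigma_\sigma$,'' a hypothesis both you and the corollary silently drop; it is worth at least recording that this caveat is inherited.
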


The condition of \emph{maximal rank} of $\Sigma_{\sigma}$ is also crucial to the existence of a discrete series subquotient in the induced module $I_{P_1}^G(\sigma_{\lambda})$ whenever $\lambda \in a_{M}^*$ is known as a residual point, as studied in \cite{heiermann}. Heiermann's approach to the infinitesimal character of an irreducible discrete series requires the notion of residual point which itself requires the rank of $\Sigma_{\sigma}$ to be maximal (see Definition 2.1 in \cite{GICD}), see also Section 3.8 in \cite{silbergerSD}.
The conditions we have obtained, in the classical root system case, on the form of the Levi $M$, in order to obtain a maximal rank root system, have already been implicitly used in the literature, see for instance Proposition 1.13 in \cite{ope}.

\subsection{Method} \label{method}
 Of course, there are always subsystems of rank 1 and as $\Theta$ is assumed to be non-empty there is no need to discuss the case where $\Sigma$ is of rank 2 (in particular $G_2$).
We start with the description of our method in the case of classical root systems. We will therefore consider the root systems $\Sigma$ of rank $ n\geq 3$ and $d \leq n-2$. Let us remark that we will find irreducible non reduced root systems: they are the $BC_d$ which contain three subsystems of rank $d$: $B_d$, $C_d$ and $D_d$.

We will use the following remark (see \cite[Equation (10) in VI.3, Proposition 12 in VI.4, Chapter VI]{bourbaki}). Let $\alpha$ and $\beta$ be two non-orthogonal distinct elements of a root system.
Set $$C=\left(\frac{1}{\cos({\alpha},{\beta})}\right)^2\qquad\hbox{and}\qquad R=\frac{||{\alpha}||^2}{||{\beta}||^2}\,\,.$$
The only possible values for $C$ (the inverse of the square of the cosinus of the angle between two roots) are 4, 2 and $4/3$ whereas assuming the length of $\alpha$ larger or equal to the one of $\beta$, the quotient of the length is respectively 1, 2 or 3. Thus, if $||\alpha||\ge||\beta||$
$$ \frac{C}{R}\in\{2^2,1,(2/3)^2\}\qquad\hbox{and}\qquad CR=4\,\,.$$

We will therefore compute the quotient of lengths and the angles of the non-trivial projections of roots in $\Sigma$, in particular those of elements in $\Delta - \Theta$.

%

In general $\Sigma_{\Theta}$ is not a root system, however let us observe:
\begin{lemma} \label{uniqueway}
The elements in $\Sigma_{\Theta}$  are, in a unique way, linear combination with entire coefficients all with the same sign of the elements in $\Delta_{\Theta}$.
\end{lemma}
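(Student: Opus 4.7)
The plan is to reduce everything to the standard fact that every root $\alpha\in\Sigma$ admits a unique expansion $\alpha=\sum_{\delta\in\Delta}n_\delta(\alpha)\,\delta$ with integer coefficients that are either all non-negative or all non-positive (this is one of the defining properties of a basis of a root system). First I would split this expansion according to the partition $\Delta=\Theta\sqcup(\Delta-\Theta)$, writing
\[
\alpha=\sum_{\beta\in\Theta}n_\beta(\alpha)\,\beta+\sum_{\gamma\in\Delta-\Theta}n_\gamma(\alpha)\,\gamma.
\]
Applying the orthogonal projection $\pi:a\to a_\Theta$, which by definition annihilates the linear span of $\Theta$, yields
\[
\pi(\alpha)=\sum_{\gamma\in\Delta-\Theta}n_\gamma(\alpha)\,\bar\gamma,
\]
where $\bar\gamma:=\pi(\gamma)$ ranges over $\Delta_\Theta$. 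Since the coefficients $n_\gamma(\alpha)$ are a subfamily of the $n_\delta(\alpha)$, they inherit the same-sign property; moreover the hypothesis that $\pi(\alpha)$ is a non-trivial projection means that at least one $n_\gamma(\alpha)$ is non-zero, so the sign is unambiguous. This gives the existence part of the statement.

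For uniqueness it suffices to show that the family $\Delta_\Theta=\{\bar\gamma:\gamma\in\Delta-\Theta\}$ is linearly independent in $a_\Theta$. Suppose $\sum_{\gamma\in\Delta-\Theta}c_\gamma\,\bar\gamma=0$. Then the vector $v:=\sum_{\gamma\in\Delta-\Theta}c_\gamma\,\gamma$ lies in $\ker(\pi)$, which is the linear span of $\Theta$, so there exist reals $d_\beta$ with
\[
\sum_{\gamma\in\Delta-\Theta}c_\gamma\,\gamma-\sum_{\beta\in\Theta}d_\beta\,\beta=0.
\]
Because $\Delta$ is linearly independent in $a$, all the $c_\gamma$ (and all the $d_\beta$) must vanish, proving linear independence of $\Delta_\Theta$ and hence uniqueness of the expansion.

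The whole argument is essentially an immediate consequence of (i) the defining property of a basis $\Delta$ of a root system, (ii) the linearity of the orthogonal projection $\pi$, and (iii) the linear independence of $\Delta$ in $a$. There is no real obstacle: the only thing worth stating explicitly is the linear independence of $\Delta_\Theta$, which could otherwise be taken for granted.
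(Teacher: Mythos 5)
Your proof is correct and follows essentially the same route as the paper: expand a root in the basis $\Delta$, observe that the orthogonal projection is linear and kills the span of $\Theta$, so the surviving coefficients are exactly those attached to $\Delta-\Theta$ and inherit the same-sign integrality. The only difference is that you spell out the uniqueness step by proving the linear independence of $\Delta_\Theta$, which the paper leaves implicit; that is a worthwhile clarification but not a different argument.
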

From Theorem 3 (page 156) or Corollary 3 (page 162) in Chapter 6, \S1, Sections 6 and 7 in \cite{bourbaki}; we know any root in $\Sigma$ can be written in a unique way as linear combination with entire coefficients all with the same sign of the elements in the basis $\Delta$. Then the statement in Lemma \ref{uniqueway} follows since the projection orthogonal to any subset $\Theta \subset \Delta$ (i.e projection onto $W^{\perp}$, if $W$ is the vector space generated by $\Theta$) is a linear map.

\section*{Acknowledgements}
The method used in the context of classical groups is due to Jean-Pierre Labesse. The author warmly thanks him for communicating it to us and for many very helpful discussions on this work. He also shared with us his note on a basis of root systems of type $E$ also used in \cite{JIMJ}. This work was started at the end of the author's PhD thesis at Aix-Marseille University and paused during a few years due to technical hurdles with the SageMath code.
We thank Olivier Ramar\'e for useful advices which got us started on SageMath, and Bill Casselman who pointed out an error in the code and suggested how to correct it. We are also grateful to him for a very enthusiastic discussion which led to some simplifications of the script. We also thank Samuel Leli\`evre, and Danny Glin who respectively helped us estimate the complexity of some earlier versions of Question 2's code and get access to the University of Calgary' server to run it.  

\section{Classical root systems}

In this section, we prove Theorem \ref{mainlab} \textit{via} a case-by-case analysis.

\subsection{The case where $\Sigma$ is of type $A_n$}
Let us consider $a_0$ to be of dimension $n+1$ with an orthonormal basis $e_1, e_2, \ldots ,e_{n+1}$. Let us denote $\Xi$ this ordered basis, i.e the ordered set of the $e_i$. The elements of $\Sigma$ are the $e_i - e_j$ with $i \neq j$; they generate a subspace $a$ of dimension $n$; let $\Delta$ be the set of simple roots $\alpha_i = e_i - e_{i+1}$. Let us denote $\overline{e_i}$ the projection of $e_i$ on $a_{\Theta}$. The Dynkin diagram of $\Theta$ is a union of irreducible (or connected) components of type $A$. Therefore, the data of $\Theta$ corresponds to a partition of the ordered set $\Xi$ in a disjoint (ordered) union of ordered parts that we index by the smallest index appearing in the indices of the basis vectors associated: 
$$\Xi=\Xi_1\cup\cdots\cup\Xi_{l}\,\,.$$
The correspondence is defined as follows, the part:
$$\Xi_r=\{e_r,\cdots,e_{r+m}\}$$
is associated to the empty subset if $m=0$ and to the subset of simple roots 
$$\{\alpha_r,\cdots,\alpha_{r+m-1}\}\qquad\hbox{if $m\ge1$}\,\,.$$ 
Let us consider an element $e_i$ in the basis $\Xi$ of $a_0$. Let $r$ be the smallest integer $j$ such that $\overline{e_j} = \overline{e_r}$, and let $r+m$ be the largest. We will have $\overline{e_k} = \overline{e_i}$ for any $k$ such that $r\le k\le r+m$. If $m=0$, it is clear. Observe that if $m=0$, the two simple consecutive roots $\alpha_{i-1}$ and $\alpha_{i}$ where $e_i$ appears are outside $\Theta$. Now, let $m \geq 0$, the root $e_r - e_{r+m}$ has a trivial projection on $a_{\Theta}$ and therefore by Lemma \ref{uniqueway} all the simple roots that occur in the expression of this root shall be in $\Theta$. As a result, the roots $\alpha_k = e_k - e_{k+1}$ belong to $\Theta$ for any $k$ such that $r\le k\le r+m-1$ and we have:
$$\overline{e_k}=\frac{e_r+e_{r+1}+\cdots+e_{r+m}}{m+1}$$ for all $k$ such that $r\le k\le r+m$.
Indeed, this expression of $\overline{e_k}$ is then orthogonal to all the roots $\alpha_k = e_k - e_{k+1}$ for any $k$ such that $r\le k\le r+m-1$.

Such a chain of simple roots is a connected component of length $m$ of the Dynkin diagram associated to $\Theta$. We have observed that such a connected component is empty when $e_r$ is orthogonal to all the elements in $\Theta$ in which case $m=0$ i.e the two consecutive simple roots $\alpha_{r-1}$ and $\alpha_r$ are outside $\Theta$. If $e_r$ is associated to a length $m$ connected component of $\Theta$ and therefore belongs to an ordered part of cardinal $m+1$ of $\Xi$, the square of the length of $\overline{e_r}$ is:
$$||\overline{e_r}||^2=\frac{1}{m+1}\,\,.$$ 
Let us consider three vectors $e_r, e_s$ and $e_t$ whose projections $\overline{e_r},\overline{e_s}$ and $\overline{e_t}$ are distinct and are associated to three components of $\Theta$ of type $A_m, A_p$ and $A_q$. Let $\alpha =e_i - e_j$ a root whose projection 
$$\overline{\alpha}=\pm(\overline{e_r}-\overline{e_s})\,\,.$$
$$||\overline{\alpha}||^2=\frac{1}{m+1}+\frac{1}{p+1}\,\,.$$
Let us consider a root $\beta=e_k-e_l$ whose projection is $$\overline{\beta}=\pm(\overline{e_s}-\overline{e_t})$$
we obtain
$$||\overline{\beta}||^2=\frac{1}{p+1}+\frac{1}{q+1}$$ and the square of the scalar product of $\overline{\alpha}$ and  $\overline{\beta}$ is
$$\left(<\overline{\alpha},\overline{\beta}>\right)^2=\frac{1}{(p+1)^2}\,\,.$$
Thus we have:
$$C=\left(\frac{1}{\cos(\overline{\alpha},\overline{\beta})}\right)^2=\left(1+\frac{p+1}{m+1}\right)\left(1+\frac{p+1}{q+1}\right),$$
and if we assume $||\overline{\beta}||\ge||\overline{\alpha}||$ i.e  $q\ge m$, we have:
$$R=\frac{||\overline{\alpha}||^2}{||\overline{\beta}||^2}=\frac{\left(1+\frac{p+1}{m+1}\right)}{\left(1+\frac{p+1}{q+1}\right)}.$$

Then
$$ \frac{C}{R}=\left(1+\frac{p+1}{q+1}\right)^2\in\{2^2,1,(2/3)^2\}\qquad\hbox{and}\qquad
CR=\left(1+\frac{p+1}{m+1}\right)^2=4
\,\,.$$
The only possible case is $C/R=4$ and thus $R=1$ and $C=4$. This implies
$m=p=q$
and $\{\overline{\alpha},\overline{\beta}\}$ generate a root system of type $A_2$: 
$\pm(\overline{e_r}-\overline{e_s})$, $\pm(\overline{e_s}-\overline{e_t})$
and $\pm(\overline{e_r}-\overline{e_t})$.

\begin{lemma}  
If $\Sigma$ is of type $A_n$ the only irreducible subsystems appearing in $\Sigma_\Theta$
are of type $A$. 
To have a root system of rank the dimension $d$ of  
$a_{\Theta}$ it is necessary if $d>1$, that the Dynkin diagram of $\Theta$ be a disjoint union of $d+1$ connected components of type $A_m$ with $m\ge0$,
the intervals between each such component being of length one:   $$n+1=(m+1)(d+1)$$ 
$$\underbrace{\disque\noteNC{\alpha_1}\th\disque\th\points\th\disque  \th\disque\noteNC{\alpha_m}}_{A_m} 
\th\cercle\th
\underbrace{\disque\th\disque\th\points\th\disque  \th\disque}_{A_m} \th\cercle\th
\points\th\cercle\th \underbrace{\disque\th\disque\th\points\th\disque 
\th\disque\noteNC{\alpha_n}}_{A_m} $$
This corresponds to a partition of the ordered basis
$\Xi$ in an union of $d+1$ ordered parts
of cardinal $m+1$:
$$\Xi=\Xi_1\cup\cdots\cup\Xi_{d+1}$$
where
$$\Xi_r=\{e_{(r-1)(m+1)+1}\cdots e_{r(m+1)}\}\,\,.$$
In this case $\Sigma_\Theta$ is of type $A_d$.
\end{lemma}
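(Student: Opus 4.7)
My plan is to combine the parametrization of $\Sigma_\Theta$ from the setup with the $C,R$ constraint derived immediately before the statement, in three steps.

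First, I would record the structure of $\Sigma_\Theta$. Every root of $\Sigma$ is of the form $e_i-e_j$, and projects to $\overline{e_r}-\overline{e_s}$ where $r,s$ index the parts of the partition $\Xi=\Xi_1\sqcup\cdots\sqcup\Xi_l$ containing $i,j$. Since each $\overline{e_r}$ is a scalar multiple of $\sum_{j\in\Xi_r}e_j$, the vectors $\overline{e_1},\ldots,\overline{e_l}$ have pairwise disjoint supports in the orthonormal family $\{e_i\}$, so they are mutually orthogonal with $\|\overline{e_r}\|^2=1/(m_r+1)$. Their pairwise differences span $a_\Theta$, which has dimension $d$, yielding $l=d+1$.

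Next I would handle the necessary direction. An irreducible root subsystem $R\subset\Sigma_\Theta$ of rank $d$ must span $a_\Theta$. Because the $\overline{e_r}$'s are pairwise orthogonal, one checks directly that every vertex $\overline{e_r}$ must appear in some root of $R$ (otherwise $R\subset\overline{e_r}^\perp\cap a_\Theta$ would have rank $\le d-1$), and that the induced incidence graph on the $d+1$ vertices is connected (otherwise $R$ would lie in the orthogonal sum of strictly smaller difference spaces of total dimension at most $d-1$). Take any two incident edges, say $\overline{e_r}-\overline{e_s}$ and $\overline{e_s}-\overline{e_t}$. These are non-orthogonal roots of $R$, so the computation in the excerpt applies and forces $C/R=4$, $R=1$, $C=4$, whence $m_r=m_s=m_t$. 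Propagating this equality along paths in the connected incidence graph forces all $m_r$ to equal a common value $m$, and the cardinality count gives $n+1=(d+1)(m+1)$. The Dynkin-diagram reformulation is immediate from the partition into $d+1$ consecutive blocks of cardinal $m+1$, each corresponding to a connected $A_m$-subdiagram of $\Theta$ with exactly one simple root of $\Delta\setminus\Theta$ between consecutive blocks.

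Finally, for the sufficient direction, assuming all $m_r=m$, the family $\overline{e_1},\ldots,\overline{e_{d+1}}$ is orthogonal with common length $1/\sqrt{m+1}$, so after rescaling it realizes an orthonormal basis, and the collection of all $\pm(\overline{e_r}-\overline{e_s})$ with $r\neq s$ is the standard model of $A_d$ inside $\Sigma_\Theta$. To conclude that only type $A$ irreducible subsystems ever occur, I would note that two non-orthogonal projected roots must share a projected vertex, and the same $C,R$ analysis then forces $60^\circ/120^\circ$ angles with equal lengths, ruling out the non-simply-laced types $B$, $C$, $F_4$, $G_2$. Moreover, since the roots of $A_n$ are already differences $e_i-e_j$, projected roots are only differences $\overline{e_r}-\overline{e_s}$ and never sums, which rules out type $D$ as well. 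The main obstacle here is purely bookkeeping, as the trigonometric work has already been carried out just before the lemma.
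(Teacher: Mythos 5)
Your proof is correct and follows essentially the same route as the paper: the $C$, $R$ computation carried out just before the lemma forces all the vectors $\overline{e_*}$ occurring in a given irreducible subsystem to have the same length, and the pairwise differences of equal-length, mutually orthogonal vectors then form a type $A$ configuration. You simply supply more detail than the paper's very terse argument (the count $l=d+1$, the connectivity and propagation of $m_r=m_s=m_t$, and the exclusion of type $D$ --- note that your ``differences only'' observation, i.e.\ that a root subsystem of $\{\overline{e_r}-\overline{e_s}\}$ is a product of type $A$ systems, also disposes of the simply-laced types $E$, which you should mention for completeness).
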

\begin{proof}
An irreducible subsystem is necessarily generated by the projections of roots of the form 
$\overline{\alpha}=\overline{e_i}-\overline{e_j}$ 
where the vectors $\overline{e_*}$ are all of the same length; when we order these vectors following the $d+1$ indices, we obtain a basis of a subspace $b_0$ of $a_0$ containing a subspace $b$ of codimension one in which the $\overline{e_i}-\overline{e_j}$ generate a system of type $A$. The rest of the corollary follows easily.
\end{proof}

\subsection{The case where $\Sigma$ is of type $B_n$}

In this case, the basis of $a$ is constituted of the $e_i$ for $i\in\{1,\cdots,n\}$ and the elements in $\Sigma$
are the $\pm e_i$ and the $\pm e_i\pm e_j$ and $\Delta$ is formed of the
$\alpha_i=e_i-e_{i+1}$ for $i\le n-1$ and of $\alpha_n=e_n$. The set $\Theta$ is an union of irreducible components which are all of type $A$ except for at most one which is of type $B_k$.

We distinguish two cases according to whether $e_n$ belongs to $\Theta$ or not, i.e according to whether one of the components is of type $B$ or not (case $k=0$).

Case 1 ($k=0$): $e_n\not\in\Theta$. In this case $\Theta$ is an union of components of type $A$.
As in the previous case, let us consider three vectors $e_r$, $e_s$ and $e_t$ whose non-trivial projections
$\overline{e_r}$, $\overline{e_s}$ ans $\overline{e_t}$ are distinct and associated to three components $\Theta$ of type $A_m$, $A_p$ and $A_q$.
Let us consider the roots of the form $\alpha=\pm e_i\pm e_j$ and $\beta=\pm e_k\pm e_l$
and let us suppose their projections write
$$\overline{\alpha}=\pm(\overline{e_r}\pm\overline{e_s})\qquad\hbox{and}\qquad
\overline{\beta}=\pm(\overline{e_s}\pm\overline{e_t})\,\,.$$
The projections are non-trivial, non-collinear, and non-orthogonal. The computations done in the previous subsection show that this family of vectors form a root system if and only if $m=p=q$. We also have in the projection of $\Sigma$ the vectors of the form:
$$\overline{\gamma}=\pm\overline{e_v}\qquad
\hbox{for $v\in\{r,s,t\}$}$$
Thus a system of type $B_3$.
Furthermore, $m \geq 1$, we also have in the projection of $\Sigma$, vectors of the form:
$$\overline{\delta}=\pm2\overline{e_v}\qquad
\hbox{for $v\in\{r,s,t\}$}$$ and in the end we obtain a root system of type $BC_3$.  

Let us consider now two roots $\alpha=\pm e_i\pm e_j$ and $\delta=\pm e_k$ whose projections write
$\overline{\alpha}=\pm(\overline{e_r}\pm\overline{e_s})$ and $\overline{\delta}=\pm\overline{e_s}$.  
We observe that
$$||\overline{\alpha}||^2=\frac{1}{m+1}+\frac{1}{p+1}
\qquad\hbox{and}\qquad
||\overline{\delta}||^2=\frac{1}{p+1}\,\,.$$
Further $||\overline{\alpha}||>||\overline{\delta}||$ and we have: $$\left(<\overline{\alpha},\overline{\delta}>\right)^2=\frac{1}{(p+1)^2}\,\,.$$

Therefore
$$C=\left(\frac{1}{\cos^(\overline{\alpha},\overline{\delta})}\right)^2=1+\frac{p+1}{m+1}
\qquad\hbox{and}\qquad
R=\frac{||\overline{\alpha}||^2}{||\overline{\delta}||^2}=1+\frac{p+1}{m+1}$$

So we have $C= R$ which forces $C= R=2$ and we recover the condition $m=p$.

Let us also remark that two short roots (that is of type $\pm\overline{e_r}$) 
or long (that is of type $\pm2\overline{e_r}$) (the length being relative to the length of roots
$\pm(\overline{e_s}\pm\overline{e_t})$) are necessarily proportional or orthogonal. This observation excludes the occurrence of a root system of type $F_4$.
Combining these observations, we see that except if $m=0$ (trivial case where the projection is the identity), we obtain maximal subsystems of type $BC$ (in particular non reduced).

Case 2 ($k\ge1$): $e_n\in\Theta$.  The projection on the orthogonal complement of $e_n$ gives a system $B_{n-1}$ and reiterating this procedure when $\Theta$ contains $B_k$, we recover the case 1 previously treated for $B_{n-k}$.
In conclusion, we have proven:

\begin{lemma}
The maximal subsystems are of type $B$ or $BC$. 
These contain the subsystems of type $B$, $C$ or $D$ of the same rank.
Let us assume $e_n$ belongs to a connected component of length $k$ (then of type $B_k$), with $k\ge0$ (the case  $k=0$ is the case in which $e_n$ does not belong to $\Theta$).
Then, the set $\Sigma_\Theta$ contains a system of rank equal to the dimension $d$ of 
$a_\Theta$ if the other components are all of the same length $m$ (and type $A_m$), the intervals between any of these components being of length one with $n-k=(m+1)d$. 
The projected system is of type  $BC_d$ except if $m=0$
in which case we obtain $B_{n-k}$.

Case 1: $k=0$, $n=d(m+1)$: the projected system is of type $BC_d$ if $m\ge1$.
$$\underbrace{\disque\noteNC{\alpha_1}\th\disque\th\points\th\disque  \th\disque\noteNC{\alpha_m}}_{A_m} 
\th\cercle\th
\underbrace{\disque\th\disque\th\points\th\disque  \th\disque}_{A_m} \th\cercle\th
\points\th\cercle\th \underbrace{\disque\th\disque\th\points\th\disque 
\th\disque}_{A_m} \thh\fdroite\cercle\noteNC{\alpha_n} $$
\par
Case 2: $k\ge1$, $n-k=d(m+1)$: the projected system is of type $BC_d$.
$$\underbrace{\disque\noteNC{\alpha_1}\th\disque\th\points\th\disque  \th\disque\noteNC{\alpha_m}}_{A_m} 
\th\cercle\th\points\th\cercle\th
\underbrace{\disque\th\disque\th\points\th\disque  \th\disque}_{A_m}
 \th\cercle\th
  \underbrace{\disque\th\disque\th\points\th
\disque \thh\fdroite\disque\noteNC{\alpha_n}}_{B_k} $$
This corresponds to a partition of the ordered basis $\Xi$ of cardinal $n$ in a union of $d+1$ ordered parts
 $$\Xi=\Xi_1\cup\cdots\cup\Xi_{d+1}$$
where
$$\Xi_r=\{e_{(r-1)(m+1)+1}\cdots e_{r(m+1)}\}
\qquad \hbox{for $1\le r\le d$ and}\qquad\Xi_{d+1}=\{e_{d(m+1)+1}\cdots e_{d(m+1)+r}\}.$$

\end{lemma}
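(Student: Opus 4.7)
The plan is to combine the cross-component and short-root computations carried out in the paragraphs immediately preceding the lemma into a full description of $\Sigma_\Theta$, splitting according to whether $e_n$ belongs to $\Theta$ (equivalently, whether $\Theta$ admits a component of type $B$).

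The first claim, that any maximal-rank subsystem in $\Sigma_\Theta$ is of type $B$ or $BC$ and therefore contains the corresponding $B_d$, $C_d$, $D_d$, follows directly from the angle and length analysis already done. The three-class computation forces every triple of projected basis vectors coming from distinct $A$-components to have equal component lengths; the short-root computation involving $\overline\alpha = \overline{e_r}\pm\overline{e_s}$ and $\overline\delta = \overline{e_s}$ yields the same restriction; and the final observation that two projected short roots (or two projected long roots) must be proportional or orthogonal rules out any $F_4$-type mixing. What survives among the rank-$d$ irreducible possibilities is exactly the $B$/$BC$ family.

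For the explicit type in Case~1 ($k=0$), I would work in the situation where $\Theta$ consists of $d$ connected components of type $A_m$ spaced by gaps of length one, so that $n = d(m+1)$, indexed by the partition $\Xi = \Xi_1 \cup \cdots \cup \Xi_d$. Each projected class $\overline{e_{r_a}}$ has length $(m+1)^{-1/2}$ and the $d$ classes are pairwise orthogonal. Enumerating the projections of the roots of $B_n$: the $\pm e_i$ produce $\pm \overline{e_{r_a}}$; the $\pm(e_i \pm e_j)$ with $e_i,e_j$ in distinct components produce $\pm(\overline{e_{r_a}}\pm\overline{e_{r_b}})$; and, when $m\ge 1$, the $\pm(e_i+e_j)$ with $e_i,e_j$ in the same component produce $\pm 2\overline{e_{r_a}}$. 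Together these vectors form precisely a $BC_d$ root system. When $m=0$ the long roots $\pm 2\overline{e_{r_a}}$ do not appear, and one recovers $B_d = B_{n-k}$.

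Case~2 ($k\ge 1$) reduces to Case~1 via a preliminary orthogonal projection. The $B_k$ component $\{\alpha_{n-k+1},\ldots,\alpha_n\}$ of $\Theta$ spans the same subspace as the set $\{e_{n-k+1},\ldots,e_n\}$, so projecting orthogonally to it annihilates these $k$ basis vectors and leaves the ambient root system $B_{n-k}$ carrying the residual set $\Theta' = \Theta\setminus\{\alpha_{n-k+1},\ldots,\alpha_n\}$, which is a pure $A$-type union. Applying Case~1 to $(B_{n-k},\Theta')$ concludes the argument. The main obstacle I expect is bookkeeping rather than a conceptual jump: one must verify that the explicitly exhibited vector set satisfies the root-system axioms (closure under reflections and integrality of Cartan integers) and that distinct classes $\overline{e_{r_a}}$ do not accidentally collide, for which Lemma~\ref{uniqueway} together with the counting $n-k = d(m+1)$ must be invoked with some care.
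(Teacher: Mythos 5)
Your proposal is correct and follows essentially the same route as the paper: the $C/R$ angle--length computations (three-component and mixed $\overline{e_r}\pm\overline{e_s}$ versus $\overline{e_s}$ cases) force equal component lengths and exclude $F_4$, the explicit enumeration of projected roots yields $BC_d$ (or $B_{n-k}$ when $m=0$), and Case 2 is reduced to Case 1 by first projecting out the $B_k$ component. Your write-up is only marginally more explicit than the paper's in listing which roots of $B_n$ produce each projected vector; there is no substantive difference in method.
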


\subsubsection{The case where $\Sigma$ is of type $C_n$}

In this case the basis of $a$ is formed with the $e_i$ for $i\in\{1,\cdots,n\}$ and the elements of $\Sigma$
are the $\pm 2e_i$ and the $\pm e_i\pm e_j$; moreover $\Delta$ is constituted of the
$\alpha_i=e_i-e_{i+1}$ for $i\le n-1$ and of $\alpha_n=2 e_n$. 
The set $\Theta$ is an union of irreducible components all of type $A$ except for at most one of type $C_k$.
We distinguish two cases whether $e_n$ belongs or not to $\Theta$.

Case 1 ($k=0$): $2e_n\not\in\Theta$. 
In this case $\Theta$ is an union of components of type $A$.
As in the case of $\Sigma$ of type $A_n$, let us consider three vectors $e_r$, $e_s$ and $e_t$ whose projections (which are non-zero) $\overline{e_r}$, $\overline{e_s}$ et $\overline{e_t}$ are distinct and associated to three components of $\Theta$
of type $A_m$, $A_p$ and $A_q$ and roots $\alpha=\pm e_i\pm e_j$ and $\beta=\pm e_k\pm e_l$
whose projections are 
$$\overline{\alpha}=\pm(\overline{e_r}\pm\overline{e_s})
\qquad\hbox{and}\qquad
\overline{\beta}=\pm(\overline{e_s}\pm\overline{e_t})$$
They will constitute a root system if and only if $m=p=q$.
Then we obtain a root system of type $C_3$ constituted of the  $\pm(\overline{e_r}\pm\overline{e_s})$, $\pm(\overline{e_s}\pm\overline{e_t})$,
$\pm(\overline{e_r}\pm\overline{e_t})$ and $\pm2\overline{e_v}$ for $v\in\{r,s,t\}$.

Let us now consider the two roots $\alpha=\pm e_i\pm e_j$ and $\beta=\pm 2e_k$ whose projections write
$$\overline{\alpha}=\pm\overline{e_r}\pm\overline{e_s}\qquad\hbox{and}\qquad
\overline{\beta}=\pm\overline{2e_s}\,\,.$$
$$||\overline{\alpha}||^2=\frac{1}{m+1}+\frac{1}{p+1}\qquad\hbox{and}\qquad
||\overline{\beta}||^2=\frac{4}{p+1}$$
and therefore $$\left(<\overline{\alpha},\overline{\beta}>\right)^2=\frac{4}{(p+1)^2}
\qquad\hbox{and}\qquad
C=\left(\frac{1}{\cos^(\overline{\alpha},\overline{\beta})}\right)^2=
\left(1+\frac{p+1}{m+1}\right)\,\,.$$
If we assume  $||\overline{\beta}||\ge||\overline{\alpha}||$ we have
$$R=\frac{||\overline{\beta}||^2}{||\overline{\alpha}||^2}=\frac{4}
{\left(1+\frac{p+1}{m+1}\right)}$$ and $CR=4$. All the cases are \textsl{a priori} possible.

If $C=2$ et $R=2$ then we necessarily have $p=m$.  
The vectors $\overline{\alpha}$ and $\overline{\beta}$ are the basis of a root system of a type $C_2$ where $\overline \beta$ is the long root.
The roots are
$$\pm\overline{\alpha}=\pm(\overline{e_r}-\overline{e_s})
\quad\hbox{,}\quad
\pm\overline{\beta}=\pm2\overline{e_s}
\quad\hbox{,}\quad
\pm(\overline{\alpha}+\overline{\beta})=\pm(\overline{e_r}+\overline{e_s})
\quad\hbox{and}\quad
\pm(2\overline{\alpha}+\overline{\beta})=\pm2\overline{e_r}\,\,.$$
The case $C=4$ and $R=1$ implies $$(p+1)=3(m+1)\qquad\hbox{and therefore}\qquad  p=3m+2.$$ 
Then $||\overline{\alpha}||$ and $||\overline{\beta}||$ constitute the basis of a 
a root system of type $A_2$ whose roots are $$\pm\overline{\alpha}=\pm(\overline{e_r}-\overline{e_s})\qquad\hbox{,}\qquad
\pm\overline{\beta}=\pm2\overline{e_s}\qquad\hbox{and}\qquad\pm(\overline{\alpha}+\overline{\beta})=\pm(\overline{e_r}+\overline{e_s})$$
but the vector $\pm2\overline{e_r}$ does not contribute to this system.

Finally if $C=4/3$ we have $$3(p+1)=(m+1)\qquad\hbox{and therefore} \qquad m=3p+2.$$
This forces $R=3$ which is a configuration of simple roots for a root system of type $G_2$ where $\overline \beta$ is the long root. However, $\Sigma_\Theta$ does not contain all the necessary roots for such a system; indeed the root
 $$\overline{\beta}+3\overline{\alpha}=3\overline{e_r}-\overline{e_s}$$ is not obtained.
 
Let us assume $||\overline\alpha||\ge||\overline\beta||$ we have $C/R=4$ and we recover the case $C=4$, $R=1$ and therefore
$(p+1)=3(m+1)$.

Case 2 ($k\ge1$): $e_n\in\Theta$.  The projection on the orthogonal complement of $e_n$ gives a system of type $BC_{n-1}$. And, reiterating this procedure, we recover the case of $BC_{n-k}$ which can be treated using our previous considerations on $B_{n-k}$ and $C_{n-k}$. 

To conclude, we have proved:

\begin{lemma}
The maximal subsystems are of type $A$,  $B$, $C$, $D$. 
Let us assume $2e_n$ belongs to a connected component of length $k$ (and type $C_k$), with $k\ge1$. 
The projection on the orthogonal of this component is a root system of type $BC_{n-k}$.
We recover the case where $k=0$, i.e where $e_n$ does not belong to $\Theta$ for a system of type $BC$.  

If $d\ge3$ the set $\Sigma_\Theta$ contains a system of rank equal to the dimension $d$
of $a_\Theta$ if the other components are all of the same length $m\ge0$ (and type $A_m$), the intervals between any of these components being of length one with $n-k=(m+1)d$, then the projected system is of type $BC_d$
(or $C_n$ if $k=0$ and $m=0$, trivial case excluded).

If $d=2$ we obtain either $BC_d$ when the two components of type $A$ are of length $m$
or $A_2$ when $(p+1)=3(m+1)$.

The case $k=0$, with $n=(m+1)d$ and projected system $C_d$
$$\underbrace{\disque\noteNC{\alpha_1}\th\disque\th\points\th\disque  \th\disque\noteNC{\alpha_m}}_{A_m} 
\th\cercle\th
\underbrace{\disque\th\disque\th\points\th\disque  \th\disque}_{A_m} \th\cercle\th
\points\th\cercle\th \underbrace{\disque\th\disque\th\points\th
\disque}_{A_m} \thh\fgauche\cercle \noteNC{\alpha_n}$$
The case $k=0$, with $p=3m+2$ and $n=4(m+1)$, and projected system containing $A_2$
$$ 
\underbrace{\disque\th\disque\th\points\th\disque  \th\disque}_{A_m} \th\cercle\th
\underbrace{\disque\th\disque\th\points\th
\disque}_{A_{p}} \thh\fgauche\cercle \noteNC{\alpha_n}$$
The case $k\ge1$, with $n-k=(m+1)d$ and projected system  $BC_d$
$$\underbrace{\disque\noteNC{\alpha_1}\th\disque\th\points\th\disque  \th\disque\noteNC{\alpha_m}}_{A_m} 
\th\cercle\th
\underbrace{\disque\th\disque\th\points\th\disque  \th\disque}_{A_m} \th\cercle\th
\points\th\cercle\th\underbrace{\disque\th\disque\th\points\th\disque  \th\disque}_{A_m} \th\cercle\th
 \underbrace{\disque\th\disque\th\points\th
\disque \thh\fgauche\disque\noteNC{\alpha_n}}_{C_k} $$
The case $k\ge1$, with $p=3m+2$ and $n-k=4(m+1)$, the projected system contains $A_2$
$$\underbrace{\disque\noteNC{\alpha_1}\th\disque\th\points\th\disque  \th\disque\noteNC{\alpha_m}}_{A_m} 
\th\cercle\th
\underbrace{\disque\th\disque\th\points\th\disque  \th\disque}_{A_p} \th\cercle\th
 \underbrace{\disque\th\disque\th\points\th
\disque \thh\fgauche\disque\noteNC{\alpha_n}}_{C_k} $$
\end{lemma}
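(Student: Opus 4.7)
The plan is to combine the three-vector computations already performed in the preceding paragraphs into a clean classification of the maximal subsystems, and to verify that the Dynkin diagrams displayed after the statement are forced. The argument has two stages. First, if $k\ge 1$ I would reduce to the case $k=0$: the $C_k$-component of $\Theta$ is supported on $V_k=\mathrm{span}(e_{n-k+1},\ldots,e_n)$ and every root in it lies in $V_k$, so $a_\Theta\subset V_k^\perp$. Consequently the non-trivial projections of the roots of $C_n$ onto $a_\Theta$ coincide with those of the sub-root-system of type $BC_{n-k}$ formed of $\pm e_i$, $\pm 2e_i$, $\pm e_i\pm e_j$ with $i,j\le n-k$ (the $\pm e_i$ arising as projections of $\pm(e_i\pm e_j)$ with $j>n-k$), relative to the remaining $A$-type components of $\Theta$. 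This places the problem in Case~1, namely $\Theta$ a disjoint union of type-$A$ components inside a $BC_{n-k}$.

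Second, in Case~1 ($k=0$) I would apply the three-vector criterion to $\overline{e_r}$, $\overline{e_s}$, $\overline{e_t}$ attached to $A$-components of respective lengths $m$, $p$, $q$. The $(C,R)$-computations already performed yield three decisive facts: (i) two long projected roots $\pm(\overline{e_r}\pm\overline{e_s})$ and $\pm(\overline{e_s}\pm\overline{e_t})$ satisfy the root-system axioms only when $m=p=q$, producing a $C_3$ on the triple; (ii) a mixed pair $\pm(\overline{e_r}\pm\overline{e_s})$ and $\pm 2\overline{e_s}$ admits exactly the two solutions $m=p$ (yielding $C_2$) and $p=3m+2$ (yielding $A_2$, with $\pm 2\overline{e_r}$ absent), the $G_2$-candidate being excluded because $3\overline{e_r}-\overline{e_s}$ is not a projection of any root; (iii) the short roots $\pm 2\overline{e_r}$ and $\pm 2\overline{e_s}$ are automatically orthogonal. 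For $d\ge 3$, applying (i) to every triple forces all $A$-components of $\Theta$ to share a common length $m$, and $\Sigma_\Theta$ then visibly contains the full $BC_d$ (degenerating to $C_n$ in the extreme $k=0$, $m=0$ case); the sporadic $A_2$ of (ii) cannot extend to a rank-$3$ system. For $d=2$ both configurations $BC_2$ and $A_2$ are realised, matching the second and fourth diagrams. Since no type outside $\{A,B,C,D\}$ arises from this analysis, the first assertion of the lemma follows.

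The interval-length-one condition and the identity $n-k=(m+1)d$ then come from a dimension count using Lemma~\ref{uniqueway}: each $A_m$-component contributes $m+1$ basis vectors to $\Xi$, each length-one gap corresponds to exactly one simple root in $\Delta-\Theta$ and hence to one dimension of $a_\Theta$, and a longer gap would contribute extra dimensions unmatched by any admissible relation among projected roots, contradicting maximal rank. The main obstacle, already dispatched above, is the enumeration in item~(ii): each of the numerical possibilities $(C,R)\in\{(4,1),(2,2),(4/3,3)\}$ and its dual under swapping $\alpha$ and $\beta$ must be tested to be either a closed sub-root-system of $\Sigma_\Theta$ or to fail by a missing root. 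Once this is settled, the diagram enumeration is a direct reading of the partition $\Xi=\Xi_1\cup\cdots\cup\Xi_{d+1}$ into the displayed ordered parts.
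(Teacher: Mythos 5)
Your proposal is correct and takes essentially the same route as the paper: reducing $k\ge1$ to the $BC_{n-k}$ situation by projecting out the $C_k$-component, running the pairwise $(C,R)$ computations with the three outcomes $(2,2)$, $(4,1)$ and $(4/3,3)$, excluding the $G_2$ configuration because $3\overline{e_r}-\overline{e_s}$ is not a projected root, and reading the diagrams off the partition of $\Xi$. (Only a cosmetic remark: the roots $\pm2\overline{e_v}$ are the \emph{long} roots of the projected $C$-type system rather than the short ones, but the orthogonality you invoke for them is correct either way.)
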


\subsubsection{The case where $\Sigma$ is of type $D_n$}

With the notations analogous to the previous cases the roots are the $\pm e_i\pm e_j$ and
$\Delta$ is constituted of
$\alpha_i=e_i-e_{i+1}$ for $i\le n-1$
and of $\alpha_n=e_{n-1}+e_n$.

Case 1:  $\alpha_{n-1}=e_{n-1}-e_n$ and $\alpha_{n}=e_{n-1}+e_n$ are in $\Theta$
and the orthogonal complement of $\Theta$ admits the $e_i$ for $1\le i\le n-2$ as a basis.
The projection on the orthogonal of $e_n$ and $e_{n-1}$ contain the $\pm e_i\pm e_j$ along with the roots $\pm e_i$
for $i$ and $j$ between $1$ and $n-2$ obtained projecting the $\pm(e_i-e_n)$.
We, therefore, obtain the system $B_{n-2}$ already considered above.

Case 2: $\alpha_{n-1}=e_{n-1}-e_n$ is in $\Theta$ but $e_{n-1}+e_n$ is not.
As in the case of root system of type $B_n$ let us consider the three vectors $e_r$, $e_s$ and $e_t$ whose non-zero projections $\overline{e_r}$, $\overline{e_s}$ et $\overline{e_t}$
are distinct and associated to three components of $\Theta$ of type $A_m$, $A_p$ and $A_q$.
Once projected we find the $\pm\overline{e_{r}}\pm \overline{e_{s}}$ and $\pm\overline{e_{s}}\pm \overline{e_{t}}$.
We also have  $$2 \overline{e_{r}}=\overline{e_{r}}+\overline{e_{r+1}}=2\overline{e_{r+1}}$$ if $\alpha_{r}=e_r-e_{r+1}$
belongs to a connected component of $\Theta$.
Therefore $\Sigma_\Theta$ contains a root system of type $C_d$  if all the connected components of $\Theta$ are of the same cardinal $m$ with $n=d(m+1)$.

Case 2': analogous to the case 2 when exchanging $e_n$ with $-e_n$.

Case 3: Neither $\alpha_{n-1}=e_{n-1}-e_n$ nor $\alpha_{n}=e_{n-1}+e_n$ are in $\Theta$.
$$\underbrace{\disque\noteNC{\alpha_1}\th\disque\th\points\th\disque  \th\disque\noteNC{\alpha_{m_1}}}_{A_{m_1}} 
 \th\cercle\th
\points\th\cercle\th \underbrace{\disque\th\disque\th\points\th\disque 
\th\disque}_{A_{m_r}} \fourche\montepeu{}\descendpeu{}$$
We, therefore, have either an analogous situation to the one treated for $A_n$, or we consider
$\overline{\alpha}=\pm\overline{e_{n-1}}\pm \overline{e_{n}}$ and
$\overline{\beta}=\overline{e_s}\pm \overline{e_{n-1}}$.

In this case we have $e_n=\overline{e_n}$ and therefore with the now familiar notations
$$R=\frac{(1+(p+1))}{(1+\frac{p+1}{m+1})}
\qquad\hbox{and}\qquad  
C=(1+(p+1))\left(1+\frac{p+1}{m+1}\right)$$
Therefore

$$\frac{C}{R}=(1+\frac{p+1}{m+1})^2$$
which forces $R=1$ and $C=4$; thus $p=m=0$.
The existence of a system of maximal rank in the projection for a configuration of this sort forces   
$m_i=0$ for any $i$, that is $\Theta$ is empty, a case which is possible but trivial hence excluded
\textit{a priori}.

To sum up, we have proven the:

\begin{lemma}  
For a system of type $D$ the systems in the projection are of type $A$, $B$, $C$ or $D$.
If $\alpha_{n-1}=e_{n-1}-e_n$ and $\alpha_n=e_{n-1}+e_n$ are in $\Theta$ and if the others components of $\Theta$ 
are all of type $A_m$, the interval between two such components are of length one, with $n-k=(m+1)d$, then there exists a system of type $BC_d$ in $\Sigma_\Theta$.  
In the case 2 or 2', the projection contains a system of maximal rank of type $C_d$ if all the components are of  type $A_m$ and if $n=(m+1)d$.

The case 1: $D_k\subset\Theta$ with $k\ge 2$; we recover the case of $B_{n-k}$.
$$\underbrace{\disque\noteNC{\alpha_1}\th\disque\th\points\th\disque  \th\disque\noteNC{\alpha_m}}_{A_m} 
\th\cercle\th
\underbrace{\disque\th\disque\th\points\th\disque  \th\disque}_{A_m} \th\cercle\th
\points\th\cercle\th \underbrace{\disque\th\disque\th\points\th\disque 
\th\disque \fourcheD\montepeu{}\descendpeu{}}_{D_k}$$

The case 2 (or 2'):  
The projection contains a rank maximal system of type $C_d$ if all the components are of type $A_m$ and if $n=(m+1)d$.

$$\underbrace{\disque\noteNC{\alpha_1}\th\disque\th\points\th\disque  \th\disque\noteNC{\alpha_m}}_{A_m} 
\th\cercle\th
\underbrace{\disque\th\disque\th\points\th\disque  \th\disque}_{A_m} \th\cercle\th
\points\th\cercle\th \underbrace{\disque\th\disque\th\points\th\disque 
\th\disque \fourched\montepeu{}\descendpeu{}}_{A_m}$$
\end{lemma}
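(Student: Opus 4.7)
The plan is to combine the four-case analysis (Cases 1, 2, 2', 3) already laid out in the preceding discussion --- each distinguished by which of the two ``forked'' simple roots $\alpha_{n-1} = e_{n-1} - e_n$ and $\alpha_n = e_{n-1} + e_n$ lie in $\Theta$ --- and to assemble the resulting projections into the three claims of the lemma. The general tool throughout is the Bourbaki length-angle criterion ($CR = 4$ and $C/R \in \{4, 1, 4/9\}$) that has already driven the $A_n$, $B_n$, and $C_n$ arguments.

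For the claim that only classical types occur, I would argue uniformly across all four cases: a projected pair of ``short'' or ``long'' roots is always proportional or orthogonal, ruling out $F_4$, and any incipient $G_2$-configuration in the angle computation fails to be completed by all six roots, just as in the $C_n$ analysis. For Case 1 (both forked roots, or more generally a $D_k$, in $\Theta$), the projections of $\pm(e_i - e_n)$ produce short roots $\pm\overline{e_i}$, reducing $\Sigma_\Theta$ to a system of type $B_{n-k}$; the $B_n$ lemma then yields $BC_d$ in $\Sigma_\Theta$ under $(n-k) = (m+1)d$ with unit-spaced $A_m$ components. For Cases 2 and 2' (exactly one of $\alpha_{n-1}, \alpha_n$ in $\Theta$), I would imitate the $B_n$ length-angle computation on projected roots $\overline{e_r} \pm \overline{e_s}$, using the identity $2\overline{e_r} = \overline{e_r + e_{r+1}} = 2\overline{e_{r+1}}$, valid whenever $\alpha_r \in \Theta$, to recover long roots $\pm 2\overline{e_r}$; isolated short roots $\pm\overline{e_r}$ are absent, giving a reduced $C_d$ system precisely under the stated uniform-component conditions.

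The main obstacle, I anticipate, is Case 3 (neither forked root in $\Theta$), where one must show that no maximal-rank system can exist unless $\Theta$ is empty (which is excluded by hypothesis). I would apply the angle-length criterion to pairs of the form $\overline{\alpha} = \overline{e_r} - \overline{e_s}$ and $\overline{\beta} = \overline{e_s} \pm \overline{e_{n-1}}$, exploiting the fact that $\overline{e_n} = e_n$ retains full length; the resulting constraint takes the form $C/R = (1 + (p+1)/(m+1))^2$, which is forced to equal $4$, giving $p = m = 0$ and collapsing all connected components of $\Theta$ to empty. This is the only point where the argument genuinely departs from the $B_n$/$C_n$ templates, so it is the step I would treat with the most care before consolidating the three conclusions of the lemma.
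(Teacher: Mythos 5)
Your proposal follows essentially the same route as the paper: the identical four-case split on whether $\alpha_{n-1}$ and $\alpha_n$ lie in $\Theta$, the reduction of Case 1 to $B_{n-k}$ via the short roots $\pm\overline{e_i}$ coming from $\pm(e_i-e_n)$, the identity $2\overline{e_r}=\overline{e_r}+\overline{e_{r+1}}$ producing the long roots of a $C_d$ in Cases 2 and 2', and the constraint $C/R=\bigl(1+\tfrac{p+1}{m+1}\bigr)^2=4$ together with $CR=4$ killing Case 3. The only blemish is notational: in Case 3 the pair you write down, $\overline{e_r}-\overline{e_s}$ and $\overline{e_s}\pm\overline{e_{n-1}}$, does not actually contain the full-length vector $\overline{e_n}=e_n$ you invoke (and without it one only gets $p=m$, not $p=m=0$); the paper uses $\overline{\alpha}=\pm\overline{e_{n-1}}\pm\overline{e_n}$ for exactly this reason, and with that correction your argument coincides with the paper's.
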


\section{Exceptional root systems}

As opposed to the previous treatment in the context of classical root systems, the case of exceptional groups requires a case-by-case analysis which could not be completed by hand and motivated the use of SageMath. 

We run two codes, one answering Question 1 and the second Question 2. 
\newline
In both cases, we first compute the projections of roots in $\Sigma$ orthogonally to a subset of the basis $\Delta$. We use both the roots and Cartan matrix as defined in SageMath (which follows the numbering of Bourbaki).  
For $\Theta \subset \Delta$, denote $\Theta^\vee$ the set of its dual roots, and $C_\Theta$ its Cartan matrix. We need to find a certain combination, $\sum$, of the roots in $\Theta$, such that $t - \sum$ is orthogonal to all the coroots in $\Theta^\vee$. We also denote $v$ the vector $\big((t, \check{\alpha_i})\big)$.
Then the list of coefficients in the combination $\sum$ equals the vector $v$ multiplied by the transpose inverse of the Cartan matrix $C_\Theta$.
We can now write the equation for the projection: 
$$\hbox{proj}(t)= t - \sum_{\alpha_i \in \Theta}\hbox{coeff}[i-1]\alpha_i$$
where we substract all the simple roots in $\Theta$ weighted by the appropriate entry in the vector of coefficients "coeff".

\subsection{The code for Question 1}

\begin{description}
\item Once we have obtained all the projections of roots, we compute their squared norms (dot product). A rapid visual check of the different values appearing allow us to further count how many roots in the projections appear with a given norm. 
\item This is enough to exclude the possibility for many exceptional root systems to occur in the projection. See the proof of Theorem \ref{exc}.
\item In a few cases, we obtain a set of vectors (of a given norm) whose cardinal is the one of a smaller exceptional group, or two sets vectors with correct cardinal and different norms whose ratio corresponds to the ratio between long and short roots (for $F_4$ and $G_2$). Then we will proceed with the further step below. 
\end{description}

\subsection{Systematizing the algorithm of Question 1 to all $\Theta$'s} \label{syst}

The Dynkin diagrams of type $E$ exhibit many symmetries that one can use to reduce the number of cases of $\Theta$'s to consider. However, if one prefers to go over \textit{all} cases of $\Theta$'s, one can run a code similar to the code given in Appendix \ref{systappendix}. 

\begin{description}
\item One calculates the projections of roots and simple roots.
\item One converts them to vectors to eliminate the zero vectors, and append them to a list before removing duplicates. 
\begin{python}
projv= vector(proj)
        if projv!= zero_vector(8):
            listvec.append(proj)
            listvec2= uniq(listvec)
\end{python}

\item The first criteria of elimination of a given $\Theta$ is the number of vectors in the projection, for instance whenever we have less than 48 vectors, then $F_4$ will not occur. 

\item Then, one produces the list of squared norms (removing duplicates) of the projections of roots, and the list of squared norms (with duplicates) of the projections of simple roots. Then one notices that only very few cases of $\Theta$'s give appropriate ratio of squared lengths (such as 3 if one is looking for a $G_2$ in the projection). 
Further, one counts the number of projections of roots with a given length, removing further cases (if there are not enough vectors of a given norm to allow for a $G_2$ or a $F_4$ to occur). 

\begin{python}
if len(listvec2)>47:
        for v in listvec2:
            n= len(listnorm)
            if v.dot_product(v)==listnorm[0]:
                listv1.append(v)
                listv1 = uniq(listv1)    
            if v.dot_product(v)==listnorm[1]:
                listv2.append(v)
                listv2 = uniq(listv2)
            if v.dot_product(v)==listnorm[n-2]:
                listv3.append(v)
                listv3 = uniq(listv3)
            if v.dot_product(v)==listnorm[n-1]:
                listv5.append(v)
                listv5 = uniq(listv5)
    lst.append(len(listv1))
    lst.append(len(listv2))
    lst.append(len(listv3))
    lst.append(len(listv5))
\end{python}

\item Finally, provided we have a correct ratio of squared norms, and each norm appears as the norm of sufficiently enough vectors, we can now look at the projections of the simple roots. In many cases, the projections of the simple roots do not have the norms with appropriate ratio. 
\end{description}

\subsection{The code for Question 2}

After implementing the code for Question 1, we are left with only a few cases where we need to verify if the projection of the simple roots is a basis of the system $X$ appearing in the projection. If it is not, we can still identify its basis by running a loop over all combinations of $|\Delta_X|$ projected vectors. 

\begin{description}
\item Assume there exists an exceptional root system of rank $d$, $X$, in $a_\Theta$ (e.g $E_6$), and let $W_X$ be the Weyl group of this exceptional root system. If there exists a set of vectors in the projection which constitutes a basis of $X$, then the Weyl group action on this set should generate all its roots, i.e should generate a set of cardinal $C_{X}$, the cardinal of $X$.
\item We compute the projections of simple roots separately, and check, using the "letsapply" function (see below), if the Weyl group constructed using these roots generate the expected number of roots. 
\item Verify if the basis' vectors have correct norms and products.
\item One can also compare the set of vectors generated by the action of the Weyl group $W_X$ on $X$ to the initial set of projections (we did in the cases $(E_8,G_2)$, $(E_8,F_4)$, $(F_4,G_2)$).
\end{description}

We now explain the different functions in this code:
\begin{itemize}
\item The "gensi" function defines elementary symmetries $(s_{\alpha_i})$.
\begin{python}
def gensi(t, projai):
    myLone = projai
    result = t- 2*myLone.dot_product(t)/(projai.dot_product(projai))*myLone
    return result
\end{python}
\item We apply the "decompose" function to the Weyl group (already defined in SageMath), so that we can apply successively each elementary symmetry to a potential basis of the expected root system. 

\begin{python}
def decompose(g, mylength):
    # g is a str(aa) where aa belongs to list(WeylGroup ...)
    listg = [g[3*i]+ g[3*i+1] for i in range(0, mylength)]
    return listg
\end{python}

\item The "letsapply" is just doing that: letting each element of the symbolic Weyl group (decomposed into elementary symmetries) acts on the potential basis of the root system.

\begin{python}
def letsapply(listg, alist, p1, p2):
    mylistg = listg
    mylistg.reverse()
    for si in mylistg:
        if si == 's1':
            alist = map(lambda w: gensi(w, p1), alist)
        elif si == 's2':
            alist = map(lambda w: gensi(w, p2), alist)
        else:
            print("Wrong input in letsapply!!")
    alistt = [tuple(v) for v in alist]
    alistts = set(alistt)

    return alistts
\end{python}

\item Note that the Weyl group is also constructed using the vectors in this "potential basis". 
\item We take randomly $n$ vectors in the projection for the "potential basis", construct the Weyl group from them, and let the elements of the Weyl group acts on them to generate, ideally, a number of vectors which is exactly the cardinal of the "expected root system". 

For instance, if "myP" denotes the set of 
\begin{python}
 proja1= myP[0]
 proja2= myP[1]
 myLel= [proja1, proja2]
 res = set(tuple(v) for v in myLel)
 for listg in listWdecomposed:
    res = res.union(letsapply(listg, myLel, proja1, proja2))
\end{python}

\end{itemize}

\begin{rmk}
The current code needs to be adapted (with different $\Theta$'s, expected root system in the projection, and cardinal of this expected root system) to each case.
\end{rmk}

\subsection{Proof of the main theorems for exceptional groups}

The results of Theorem \ref{exc} rely on the following Lemma:
\begin{lemma}\label{thetaone}
Let $\Sigma$ be an irreducible root system.
Any two subsets $\Theta$ and $\Theta'$, each constituted of only one root of $\Sigma$, of the same length, are conjugated under the
Weyl group $W$. If w is a Weyl group element sending $\Theta$ on $\Theta'$, we have $\Sigma_{\Theta'} = w(\Sigma_{\Theta})$.
\end{lemma}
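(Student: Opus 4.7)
The lemma combines two classical facts, and the plan is to establish them in sequence.

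First, I would prove that any two roots of the same length in an irreducible root system are conjugate under the Weyl group. Since every root is Weyl-conjugate to a simple root (a standard fact: the Weyl group acts transitively on Weyl chambers, hence on the set of bases, and any root lies in some chamber wall), it suffices to show that any two simple roots of the same length are conjugate. For this, I would use the connectedness of the Dynkin diagram of $\Sigma$: given two adjacent simple roots $\alpha_i, \alpha_j$ of the same length, the reflection $s_{\alpha_i} s_{\alpha_j} s_{\alpha_i}$ exchanges them (this is a direct computation using the fact that for same-length adjacent roots the Cartan integers are both $-1$, giving an $A_2$ subsystem). Then irreducibility (connectedness of the Dynkin diagram) allows one to chain such exchanges along a path between any two simple roots of the same length. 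This argument is essentially the one in Bourbaki, Chapter~VI, §1.3 and §1.5.

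Second, the equality $\Sigma_{\Theta'} = w(\Sigma_\Theta)$ is an almost formal consequence of the fact that Weyl group elements act by orthogonal transformations on $a$. Explicitly, since $w$ is orthogonal and sends the linear span of $\Theta$ to the linear span of $\Theta'$, it also sends the orthogonal complement $a_\Theta$ to $a_{\Theta'}$; consequently the orthogonal projections satisfy $p_{\Theta'} \circ w = w \circ p_\Theta$. Applying this to any $\alpha \in \Sigma$, and using that $w$ preserves $\Sigma$ (so that $w(\Sigma) = \Sigma$), one gets
\[
\Sigma_{\Theta'} \;=\; \{\, p_{\Theta'}(\beta) : \beta \in \Sigma,\; p_{\Theta'}(\beta)\neq 0 \,\} \;=\; \{\, w\, p_\Theta(w^{-1}\beta) : \beta \in \Sigma \,\}^{*} \;=\; w(\Sigma_\Theta),
\]
where the superscript indicates we keep only non-zero vectors, noting that $p_{\Theta'}(\beta)=0$ if and only if $p_\Theta(w^{-1}\beta) = 0$ since $w$ is an isomorphism.

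The first part is the substantive step; the second is bookkeeping. The main (small) obstacle is giving a clean, self-contained justification of transitivity on roots of the same length, since one must rule out the possibility that roots of a given length fall into several $W$-orbits in the irreducible case. I would handle this by reducing to simple roots, then invoking connectedness of the Dynkin diagram plus the explicit $A_2$ computation for adjacent same-length nodes.
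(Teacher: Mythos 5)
Your proof is correct and follows essentially the same route as the paper's: the paper likewise reduces to the classical fact that all roots of a given length in an irreducible system are $W$-conjugate (citing Humphreys, III.10, Lemma~C, and Bourbaki's completion of a root to a basis, rather than re-deriving the fact), and then observes that since $w$ is an isometry preserving $\Sigma$, projecting orthogonally to $\Theta'=w(\Theta)$ commutes with $w$. The one caveat concerns your sketch of that classical fact: chaining $A_2$-exchanges along a path requires that the simple roots of a given length form a connected subdiagram of the Dynkin diagram (true in every irreducible type, but verified case by case, not a formal consequence of connectedness of the whole diagram, since a path could a priori pass through nodes of the other length) --- the cited proof avoids this by using irreducibility to replace one root by a $W$-conjugate non-orthogonal to the other and concluding inside the resulting rank-two subsystem.
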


\begin{proof}
First by an equivalent of the incomplete basis theorem for root systems (see \cite{bourbaki}, Chap VI, 1, Proposition 24), it is always possible to complete any root to a basis of the root system. Hence it is enough to consider the case of simple roots. \\
By a classical lemma (see for instance Lemma C in \cite{humphreys}, III, 10), all roots of the same length are conjugate under $W$.
Let us consider any two basis' roots of the same length, $\alpha$, and $\beta = w(\alpha)$.
Let us assume we project all vectors in $\Sigma$ orthogonally to $\Theta= \{\alpha\}$, and $\Theta'= \{\beta\}$. 
Since the Weyl group is a subgroup of the isometry group of the root system, it preserves lengths and angles.
Therefore applying first $w\in W$ to all roots and projecting with respect to $\beta$ yields the same result as applying $w\in W$ to  $\Sigma_{\Theta}$.
\end{proof}

\begin{rmk}\label{oneremark}
Under the conditions of Lemma \ref{thetaone}, if $\Sigma_{\Theta}$ contains a maximal root system of rank $d$, then it doesn't depend on the choice of the root generating $\Theta$; It is enough to determine $\Sigma_{\Theta}$ for any root in $\Sigma$. Indeed the ratios and angles between roots in the maximal root system occurring in $\Sigma_{\Theta}$ and $w(\Sigma_{\Theta})$ are the same.
\end{rmk}

\begin{proof}[Proof of Theorem \ref{exc}]
Let us start with $\Sigma = E_8$. We consider first the projection orthogonal to one simple root. By Lemma \ref{thetaone}, we can choose any of them, and the result will be the same. Then, using SageMath we observe that two different norms appear in the projection orthogonal to any simple root. In particular, there are exactly 126 vectors of the same norm, which is the number of roots of $E_7$. The existence of $E_7$ can also be seen using the basis given in the Appendix \ref{EE}, when taking the projection orthogonal to $\alpha_8$
$${\cercle\noteSC{\alpha_2}\th\cercle\noteSC{\alpha_3}}
\th
\ctvb\noteSC{\alpha_4}{\gnoteN{\alpha_1}{20.5}}
\th
{\cercle\noteSC{\alpha_5}\th\cercle\noteSC{\alpha_6}\th\cercle\noteSC{\alpha_7}\th\disque\noteSC{\alpha_8}}$$

We obtain $\alpha_1 = \frac{1}{2}[e_0+ e_1+ e_2+e_3-e_4-e_5- e_6-e_7]$ and the $\alpha_i$ with $2 \leq i \leq 6$ which generates the $E_6$, but also $\beta  = -\frac{1}{2}[e_0- e_1- e_2-e_3-e_4-e_5+ e_6-e_7]$. The Dynkin diagram associated to $(\alpha_1, \ldots, \alpha_6, \beta)$ is the one of $E_7$. It also possible to check that the root system $E_7$, a subsystem in $E_8$, happens to be orthogonal to some root in $E_8$, and again conclude by Lemma \ref{thetaone}. 
It could be tempting to believe an $E_6$ will appear in the projection orthogonal to some root in $E_7$, but this time, there are not enough vectors of a given norm in the projection, respectively 60 and 62, whereas $E_6$ has 72 roots. 
It is however possible when starting from $E_8$ to find 72 roots of the same norm, prefiguring $E_6$, in the projection orthogonal to two simple roots in three cases (three different $\Theta$). Since $E_6$ is also a subsystem in $E_8$, it is possible to check that it is orthogonal to two specific roots of $E_8$. With the same method, we can verify that no root system of type $F_4$ appear in $E_7$ and $E_6$ (for all possible combination of simple roots in $\Theta$), but one $F_4$ appears in $E_8$. 
The root system of $G_2$, of smaller size is prefigured in the projection in all exceptional root systems, albeit only for a unique $\Theta$ for type $E$ root systems, and for two in $F_4$. In the proof of the next theorem, we check that not only those sets have the correct cardinality, but that they are indeed root systems. 
\end{proof}

\begin{proof}[Proof of Theorem \ref{exc2}]
Using the results of Theorem \ref{exc}, we have only a few cases to verify here. Since we have taken the projection orthogonal to some simple root of $E_8$ to get $E_7$, it is clear that the projection of the adjacent simple root(s) in the Dynkin diagram will not have the same norm as the projection of other simple roots rendering impossible for this $E_7$ to have a basis constituted of projections of simple roots. Using the Question 2's SageMath code, we can also show that $E_6$ won't either have a basis constituted of projections of simple roots. However, we can check with this code that the $G_2$ and $F_4$ (for the relevant $\Theta$) are generated by projections of simple roots. Similarly, the $G_2$ appearing in the projections of $F_4$, $E_6$ and $E_7$, for the specific choices of $\Theta$ obtained in Theorem \ref{exc} have a basis constituted of projections of simple roots.  
\end{proof}

\subsection{The case of exceptional root system appearing in a reducible subsystem of $\Sigma_\Theta$}

\begin{proposition} \label{exc3}
Let $\Sigma$ be an irreducible root system of exceptional type with basis $\Delta$. Let $\Theta$ be a subset of $\Delta$ and $d=|\Delta -\Theta|$. In the following cases, and only for those cases, $\Sigma_{\Theta}$ contains a reducible root system of rank $d$ with at least one irreducible component of exceptional type, and whose basis is constituted of projections of simple roots in $\Delta$.
\begin{itemize}
\item Let $\Sigma$ be of type $E_8$. If $\Theta$ is either of $\{\alpha_1,\alpha_3,\alpha_5, \alpha_6, \alpha_8\}$ or $\{\alpha_2,\alpha_4,\alpha_5, \alpha_6, \alpha_7\}$, then $\Sigma_{\Theta}$ contains a system of type $G_2\times A_1$. If $\Theta=\{\alpha_2, \alpha_5,\alpha_7\}$, $\Sigma_{\Theta}$ contains a system of type $F_4\times A_1$. We can also find $G_2\times A_1\times A_1$ if $\Theta= \{\alpha_1,\alpha_3,\alpha_5, \alpha_6\}$. 
\item Let $\Sigma$ be of type $E_7$. If $\Theta$ is either of $\{\alpha_1,\alpha_3,\alpha_5, \alpha_6\}$, $\Sigma_{\Theta}$ contains a system of type $G_2\times A_1$. 
 \end{itemize}
\end{proposition}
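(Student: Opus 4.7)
The plan is to reuse the SageMath toolkit developed for Theorems \ref{exc} and \ref{exc2}, combined with a reduction argument that exploits the classification those theorems already provide, so that only finitely many candidate pairs $(\Sigma,\Theta)$ need to be examined by hand (or by computer). Throughout let $d=|\Delta-\Theta|$ as usual.

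First I would argue that Theorem \ref{exc2} constrains the search drastically. Suppose $\Sigma_\Theta$ contains a reducible root system of rank $d$ of the form $X\times Y$, where $X$ is exceptional of rank $d_X$, $Y$ is a product of copies of $A_1$, and every element of the basis is the projection of a simple root. Then $X$ lies in the orthogonal of each basis vector of $Y$. Projecting $\Sigma_\Theta$ orthogonally to those basis vectors of $Y$ produces a root system in dimension $d_X$ containing $X$, so the enlarged data $(\Sigma,\Theta\cup Y')$, where $Y'\subset\Delta$ is a suitable lift of $Y$, must fall into the list enumerated in Theorem \ref{exc2}. Inspecting that list gives a short list of candidate $\Theta$'s to test, matching exactly the four configurations stated in the proposition.

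Next, for each of those candidates, I would exhibit an explicit partition of $\Delta-\Theta$ into blocks and compute projected simple roots using the formula of Section 3, namely $\overline{t}=t-\sum_{\alpha_j\in\Theta}c_j\alpha_j$ where the vector $(c_j)$ is obtained by applying the transpose inverse of $C_\Theta$ to $\bigl((t,\check{\alpha}_j)\bigr)_{\alpha_j\in\Theta}$. One then verifies three points by direct calculation: the projected simple roots in the $X$-block reproduce the Cartan integers of $X$ (length ratios $3$ for $G_2$, $2$ for $F_4$, correct angles), the projected simple roots outside the $X$-block are orthogonal to every vector in the $X$-block, and the remaining projected simple roots are pairwise orthogonal, each generating its own $A_1$. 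Closure is then checked as in the Question~2 code by building the finite reflection group generated by the proposed basis, applying its elements to this basis, and confirming that the resulting set is contained in $\Sigma_\Theta$ and has the expected cardinality: $14$ for $G_2\times A_1$, $50$ for $F_4\times A_1$, and $16$ for $G_2\times A_1\times A_1$.

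The main obstacle is the ``and only for those cases'' clause, since in principle an exceptional component could combine with different orthogonal $A_1$'s, or a seemingly unrelated $\Theta$ could still satisfy the length and angle prefilters. I would handle this by looping over every $\Theta\subset\Delta$ compatible with the reduction argument of the first paragraph, computing the full decomposition of the projected simple roots into mutually orthogonal blocks, and matching each block's Dynkin type against the admissible exceptional components surviving Theorem \ref{exc2}. Because the prefilter on projected squared norms and on the cardinality of the projection eliminates the vast majority of $\Theta$'s already at the level of Question~1, this residual enumeration is finite and mechanical, well within the scope of the SageMath routines described above.
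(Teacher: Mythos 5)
Your second and third paragraphs are in the spirit of the paper's actual proof, which is a direct computational enumeration: run the systematized Question~1 prefilters (cardinality of $\Sigma_\Theta$, ratios and multiplicities of the squared norms of the projected roots and of the projected simple roots) over \emph{all} admissible $\Theta$, then confirm the survivors with the Question~2 code by generating the orbit of the candidate basis under the reflection group it defines and comparing with $\Sigma_\Theta$. The paper also disposes of $G_2\times G_2$ and $G_2\times A_2$ in $E_8$ by noting that one of the four projected simple roots always has squared norm $4/3$. So the verification half of your plan is consistent with what the paper does.

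The problem is your first paragraph, on which you then rely to bound the search. The reduction itself is sound: if $X\times Y\subset\Sigma_\Theta$ has basis $\Delta_\Theta$ and $Y$ is spanned by the projections $\overline{\gamma_1},\dots,\overline{\gamma_k}$ of simple roots $\gamma_j\in\Delta-\Theta$, then $\mathrm{span}(\Theta\cup\{\gamma_j\})=\mathrm{span}(\Theta)\oplus\mathrm{span}(\overline{\gamma_j})$, so projecting orthogonally to $\Theta\cup\{\gamma_j\}$ fixes $X$ pointwise and exhibits $X$ as a maximal-rank irreducible exceptional system in $\Sigma_{\Theta\cup\{\gamma_j\}}$ with basis of projected simple roots; hence $(\Sigma,\Theta\cup\{\gamma_j\})$ must occur in Theorem \ref{exc2}, i.e.\ $\Theta$ must be a \emph{subset} of one of the $\Theta_0$'s listed there. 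But the configurations of Proposition \ref{exc3} are, for the most part, not such subsets: $\{\alpha_2,\alpha_5,\alpha_7\}\not\subset\{\alpha_2,\alpha_3,\alpha_4,\alpha_5\}$ (the only $E_8$/$F_4$ entry of Theorem \ref{exc2}), $\{\alpha_1,\alpha_3,\alpha_5,\alpha_6,\alpha_8\}\not\subset\{\alpha_1,\dots,\alpha_6\}$, and for $E_7$ the set $\{\alpha_1,\alpha_3,\alpha_5,\alpha_6\}$ is contained in neither $\{\alpha_2,\alpha_4,\alpha_5,\alpha_6,\alpha_7\}$ nor $\{\alpha_1,\alpha_2,\alpha_3,\alpha_5,\alpha_7\}$. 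So your assertion that the candidates produced by the reduction ``match exactly the four configurations stated in the proposition'' is false; taken at face value your argument would \emph{exclude} most of the cases the proposition asserts, and your final enumeration, being restricted to ``$\Theta$ compatible with the reduction argument of the first paragraph,'' would never even test them. You must either drop the reduction and enumerate all $\Theta$ directly as the paper does, or confront head-on the tension your reduction exposes between Proposition \ref{exc3} and Theorem \ref{exc2}; as written, the proof cannot establish the statement.
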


\begin{proof}
In this proposition, we only focus on the cases where the basis of our potential root system in the projection is constituted of projections of simple roots. We consider all the different possibilities of having a product of exceptional root systems of rank $d$ (such as $F_4\times G_2$, or $G_2\times G_2$) in $\Sigma_\Theta$ whenever $\Sigma$ is exceptional, or a product of an exceptional root system with a (or many) classical one. In particular, we consider the possibilities of $G_2 \times A_n$ (resp. $F_4 \times A_n$) for various $n$ to occur. We first use the algorithm given in Subsection \ref{syst} to eliminate most of the $\Theta$'s (satisfying the condition $\rk(\Sigma)-\rk(\Theta)=d$). If $\Sigma$ is of type $E_7$ or $E_6$, letting $d$ varies, we did not find any $\Theta$ letting a $G_2$ or $F_4$ appear except one. Most of the time the reason being that the ratio of squared norms were neither 3 nor 2, or because such squared norms appeared for too few vectors. In the case of $E_7$, we find two set of roots potentially leading to a $G_2\times A_1$, $\{\alpha_1,\alpha_3,\alpha_5, \alpha_6\}$ and $\{\alpha_2,\alpha_4,\alpha_6, \alpha_7\}$. Further examination using the code of Question 2 shows that only the first set leads to the appearance of $G_2\times A_1$ in the projection. 

Now considering $\Sigma=E_8$, we considered the case of $G_2\times G_2$. Only a few $\Theta$'s gave squared lengths whose ratio was 3. In those cases, the squared lengths were [2, 2/3, 4/3] with enough vectors (12 at least) for the squared norms 2/3 and 2. However, if two $G_2$ appear, we need two projections of simple roots of each norm, but in all cases one projection (among the four projections of simple roots) was of squared norm 4/3. For the same reason, $G_2\times A_2$ doesn't appear. This could also be verified using the code for Question 2 in the doubtful cases. Finally, in the few cases listed in the statement of the theorem, we verified their occurrence using the code for Question 2. 
\end{proof}

\appendix
\section{A basis different from Bourbaki's for type $E$ root systems} \label{EE}

We recall the $E$ basis as proposed by Jean-Pierre Labesse (see \cite{JIMJ} and an unpublished note): the details are given below (see also the remark \ref{Labesseconvention}). We use it to understand how $E_7$ is obtained in the projection of $E_8$. 

\subsubsection{The case $E_6$}

We consider the euclidean space $\widetilde{V}$ of dimension 8, equipped with a orthonormal basis indexed by the elements 
of $\mathbb Z/8\mathbb Z$
$$\{e_0,e_1,\cdots,e_6,e_7\}$$ such that $e_0$ will sometimes be denoted $e_8$.
The roots of $E_6$ are the roots in $E_7$ orthogonal to $e_7 - e_8 = -(e_7 + e_0)$ (see the definitions of $E_8$ and $E_7$ in the next subsections).
They are of the following form: 
\begin{itemize}
\item $\pm(e_i-e_j)$ for $1\le i<j\le6$ or $i=0$ and $j=7$.
\item $\pm\frac{1}{ 2}[(e_0-e_7)\pm e_1\pm e_2\cdots\pm e_6]$ 
\end{itemize} with the same number of 
 $+$ and $-$ sign in the bracket.
A system of simple roots is given by 
$$\alpha_{1} =\frac{1}{2}[e_0+ e_1+ e_2+e_3-e_4-e_5- e_6-e_7]\qquad\hbox{and}\qquad
\alpha_{i+1}=[e_{i+1}-e_i]\qquad\hbox{for $1\le i\le5$}\,\,.$$
$${\cercle\noteSC{\alpha_2}\th\cercle\noteSC{\alpha_3}}
\th
\ctvb\noteSC{\alpha_4}{\gnoteN{\alpha_1}{20.5}}
\th
{\cercle\noteSC{\alpha_5}\th\cercle\noteSC{\alpha_6}}$$

\begin{rmk} \label{Labesseconvention}
This depiction is different from the one given in Bourbaki: we have used a subsystem of the system $E_8$ as defined by Bourbaki, except that $\epsilon_8$ is here $e_0$ and that we have an order -and therefore simple roots- which is (are) different(s).
In particular, in our convention the roles of  $\alpha_1$ and $\alpha_2$ in the Dynkin diagram are inverted.
The correspondence is the following:\par
\smallskip
\hfil  \kern -10pt Our notation $\qquad\longleftrightarrow$\quad
 Bourbaki's notation \hfill\par\noindent
$\alpha_1 =\frac{1}{ 2}[e_0+ e_1+ e_2+e_3-e_4-e_5- e_6-e_7]
 \longleftrightarrow\quad\alpha_2=\epsilon_1+\epsilon_2$\par\medskip
 \hskip 125pt
 $\alpha_2=e_{2}-e_1\longleftrightarrow\quad \alpha_1=
\frac{1}{ 2}[\epsilon_1
 -\epsilon_2-\epsilon_3-\epsilon_4-\epsilon_5-\epsilon_6-\epsilon_7+\epsilon_8]$\par
 \hskip 110 pt  $\alpha_{i+1}=e_{i+1}-e_i\longleftrightarrow\quad
\alpha_{i+1}=\epsilon_{i}-\epsilon_{i-1}\qquad\hbox{for}\qquad 2\le i\le 5$
\par\noindent
With our writing, it is easily seen that there exists an automorphism
$\theta(e_i)=-e_{(7-i)}$; sending $\alpha_{i+1}$ on $\alpha_{7-i}$ for $1\le i\le5$ and it fixes $\alpha_{1} $ and $\alpha_4$.\\
\end{rmk}

\subsubsection{The case $E_8$} \label{e8}
The positive roots are of the following form:
\begin{itemize}
\item { } $\pm e_i\pm e_j$ for $0\leq i<j \leq 7$.
\item { } $\frac{1}{2}[\pm e_0\pm e_1\pm e_2\cdots\pm e_6\pm e_7]$ with an even number of negative signs.
\end{itemize}
A system of simple roots is given by:

$$\alpha_{1} =\frac{1}{ 2}[e_0+e_1+e_2+e_3-e_4-e_5-e_6-e_7]\qquad\hbox{and}\qquad
\alpha_{i}= e_{i}-e_{i-1}\qquad\hbox{for $2\le i\le 8$}$$

$${\cercle\noteSC{\alpha_2}\th\cercle\noteSC{\alpha_3}}
\th
\ctvb\noteSC{\alpha_4}{\gnoteN{\alpha_1}{20.5}}
\th
{\cercle\noteSC{\alpha_5}\th\cercle\noteSC{\alpha_6}\th\cercle\noteSC{\alpha_7}\th\cercle\noteSC{\alpha_8}}$$ 


\section{Example of the code of Question 1 for $(E_6, F_4)$}
\begin{python}
e = RootSystem(['E',6]).ambient_space()
Roots = e.roots()
C = CartanMatrix(['E', 6])
print(C)

S = C.subtype([1,2])
print(S)
C = S.transpose()
print(C.inverse())

def prod(x, ai):
    result = 2*x.dot_product(ai)/ai.dot_product(ai)
    return result

listvec=[]
listvec2=[]
listnorm=[]
listv1=[]
listv2=[]
listv3=[]

for x in Roots:
    v1 = prod(x, e.simple_root(1))
    v2 = prod(x, e.simple_root(2))
    v= vector((v1, v2))
    coeff= v*C.inverse()
    proj= x-coeff[0]*e.simple_root(1)-coeff[1]*e.simple_root(2)
    projv= vector(proj)
    if projv != zero_vector(6):
        listvec.append(proj)
        listvec2= uniq(listvec)
        for v in listvec2:
            print(v.dot_product(v))
            if v.dot_product(v)==2:
                listv1.append(v)
                listv1 = uniq(listv1)
            if v.dot_product(v)==3/2:
                listv2.append(v)
                listv2 = uniq(listv2)                   
            if v.dot_product(v)==1:
                listv3.append(v)
                listv3 = uniq(listv3)
                if (len(listv1)+len(listv2)+len(listv3))>20:
                    print(len(listv1))
                    print(len(listv2))
                    print(len(listv3))
                    print(len(listvec2))
                    
\end{python}

\newpage
\section{Systematizing the algorithm of Question 1}
\label{systappendix}
\begin{python}
e = RootSystem(['E',7]).ambient_space()
Roots = e.roots()
Simples = e.simple_roots()
C = CartanMatrix(['E', 7])

def prod(x, ai):
    result = 2*x.dot_product(ai)/ai.dot_product(ai)
    return result

RootsN = [1,2,3,4,5,6,7]
for index in Arrangements(RootsN, 4):
    listnorm=[]
    listvec=[]
    listvec2=[]
    listv1=[]
    listv2=[]
    listv3=[]
    listv4=[]
    listv5=[]
    listsimples=[]
    listnorm2=[]
    lst=[]
    i= index[0]
    j= index[1]
    k= index[2]
    l= index[3]
    J= [i,j,k,l]
    with open("C:/Users/.../E7G2A1.txt", "a") as f:
        f.write("G2A1")
        f.write("\n")
        f.write(str(J))
        f.write("\n")
    S = C.subtype([i,j,k,l])
    Ct = S.transpose()
    
    for x in Simples:
        v1 = prod(x, e.simple_root(i))
        v2 = prod(x, e.simple_root(j))
        v3 = prod(x, e.simple_root(k))
        v4 = prod(x, e.simple_root(l))
        v= vector((v1, v2, v3, v4))
        coeff= v*Ct.inverse()
        proj= x-coeff[0]*e.simple_root(i)-coeff[1]*e.simple_root(j)-coeff[2]*e.simple_root(k)-coeff[3]*e.simple_root(l)
        projv= vector(proj)
        if projv!= zero_vector(8):
            listsimples.append(proj)
            listsimples= uniq(listsimples)
    with open("C:/Users/Sarah/Documents/E7G2A1.txt", "a") as f:
        f.write("list simples")
        f.write(str((listsimples)))
    for v in listsimples:
        R= v.dot_product(v)
        listnorm2.append(R)
    with open("C:/Users/.../E7G2A1.txt", "a") as f:
        f.write("\n")
        f.write(str((listnorm2)))     
        f.write("\n")
    
    for x in Roots:
        v1 = prod(x, e.simple_root(i))
        v2 = prod(x, e.simple_root(j))
        v3 = prod(x, e.simple_root(k))
        v4 = prod(x, e.simple_root(l))
        v= vector((v1, v2, v3, v4))
        coeff= v*Ct.inverse()
        proj= x-coeff[0]*e.simple_root(i)-coeff[1]*e.simple_root(j)-coeff[2]*e.simple_root(k)-coeff[3]*e.simple_root(l)
        projv= vector(proj)
        if projv!= zero_vector(8):
            listvec.append(proj)
            listvec2= uniq(listvec)
    with open("C:/Users/.../E7G2A1.txt", "a") as f:
        f.write(str(len(listvec2)))
        f.write("\n")
    for v in listvec2:
        R= v.dot_product(v)
        listnorm.append(R)
        listnorm= uniq(listnorm)
    with open("C:/Users/Sarah/Documents/E7G2A1.txt", "a") as f:
        f.write("\n")
        f.write("listnorm")
        f.write((str((listnorm))))
        f.write("\n")
    if len(listvec2)>12:
        for v in listvec2:
            n= len(listnorm)
            if v.dot_product(v)==listnorm[0]:
                listv1.append(v)
                listv1 = uniq(listv1)    
            if v.dot_product(v)==listnorm[1]:
                listv2.append(v)
                listv2 = uniq(listv2)
            if v.dot_product(v)==listnorm[n-3]:
                listv3.append(v)
                listv3 = uniq(listv3)
            if v.dot_product(v)==listnorm[n-2]:
                listv4.append(v)
                listv4 = uniq(listv4)
            if v.dot_product(v)==listnorm[n-1]:
                listv5.append(v)
                listv5 = uniq(listv5)  
    lst.append(len(listv1))
    lst.append(len(listv2))
    lst.append(len(listv3))
    lst.append(len(listv4))
    lst.append(len(listv5))
    with open("C:/Users/.../E7G2A1.txt", "a") as f:
        f.write(str(lst))

\end{python}

\newpage
\section{Example of the code of Question 2 for $(E_7, G_2)$}
\begin{python}
C = CartanMatrix(['E', 7])
print(C)
S = C.subtype([2,4,5,6,7])
print(S)
C = S.transpose()
print(C.inverse())

e = RootSystem(['E',7]).ambient_space()

def prod(x, ai):
    result = 2*x.dot_product(ai)/ai.dot_product(ai)
    return result

def proj(t, C):
    v1 = prod(t, e.simple_root(2))
    v2 = prod(t, e.simple_root(4))
    v3 = prod(t, e.simple_root(5))
    v4 = prod(t, e.simple_root(6))
    v5 = prod(t, e.simple_root(7))
    v= vector((v1, v2, v3, v4, v5))
    coeff= v*C.inverse()
    proj= t-coeff[0]*e.simple_root(2)-coeff[1]*e.simple_root(4)-coeff[2]*e.simple_root(5)-coeff[3]*e.simple_root(6)-coeff[4]*e.simple_root(7)
    return proj

def gensi(t, projai):
    myLone = projai
    result = t- 2*myLone.dot_product(t)/(projai.dot_product(projai))*myLone
    return result

def decompose(g, mylength):
    # g is a str(aa) where aa belongs to list(WeylGroup ...)
    listg = [g[3*i]+ g[3*i+1] for i in range(0, mylength)]
    return listg

def letsapply(listg, alist, p1, p2):
    mylistg = listg
    mylistg.reverse()
    for si in mylistg:
        if si == 's1':
            alist = map(lambda w: gensi(w, p1), alist)
        elif si == 's2':
            alist = map(lambda w: gensi(w, p2), alist)
        else:
            print("Wrong input in letsapply!!")
    alistt = [tuple(v) for v in alist]
    alistts = set(alistt)

    return alistts

def main():
    W = WeylGroup(['G', 2], prefix="s")
    listW = list(W)
    # We remove the first element which is 1:
    listW.pop(0)
    listWdecomposed = [decompose(str(g), g.length()) for g in listW]
    myP=[]
    e = RootSystem(['E',7]).ambient_space()
    Roots = e.roots()
    Simples = e.simple_roots()
    print(len(Simples))
    ## get all projections of roots in E7
    myL = [proj(x, C) for x in Roots]
    # remove duplicates
    myL= uniq(myL)
    myL = [vector(v) for v in myL]
    myL = [pp for pp in myL if pp != zero_vector(8)]
    if len(myL)<12:
        return 0
        print("myL too small")
   # deal with the projections of simple roots 
    for x in Simples:
        v1 = prod(x, e.simple_root(2))
        v2 = prod(x, e.simple_root(4))
        v3 = prod(x, e.simple_root(5))
        v4 = prod(x, e.simple_root(6))
        v5 = prod(x, e.simple_root(7))
        v= vector((v1, v2, v3, v4, v5))
        coeff= v*C.inverse()
        pr= x-coeff[0]*e.simple_root(2)-coeff[1]*e.simple_root(4)-coeff[2]*e.simple_root(5)-coeff[3]*e.simple_root(6)-coeff[4]*e.simple_root(7)
        prv= vector(pr)
        if prv != zero_vector(8):  
            myP.append(pr)
            print(myP)
            print(len(myL))
            for a in myP:
                print(a.dot_product(a))
            myP = [vector(v) for v in myP]
            myP = [pp for pp in myP if pp != zero_vector(8)]
            # can write some info in some external file
        with open("E7G2.txt", "a") as f:
            f.write("myP and len(myL)")
            f.write(str(myP))
            f.write(str(len(myL)))
    
    # Define the two projections of simple roots that will be used to construct the Weyl group
    proja1= myP[0]
    proja2= myP[1]
    myLel= [proja1, proja2]
    print(myLel)
    print("End of precomputations, beginning of the loop.")
    res = set(tuple(v) for v in myLel)
    ShouldTakeNextmyLel = False
    for listg in listWdecomposed:
            res = res.union(letsapply(listg, myLel, proja1, proja2))
            print(res)
            print(len(res))
            # discriminate depending on the size of the set generated by the Weyl group acting on the two vectors
            if len(res) > 12:
                ShouldTakeNextmyLel = True
                break # No use continuing with this myLel
            if len(res) == 12: 
                res = [vector(v) for v in res]
                res = list(res)
                for x in res:
                    print(x.dot_product(x))
                LLL =[]
                lst=[]
                for a in myLel:
                    for b in myLel:
                        R = prod(a,b)
                        lst.append(R)
                        print(lst)

    if ShouldTakeNextmyLel:
        next

    return 0

main()
\end{python}

\addresseshere

\bibliographystyle{plain}
\bibliography{PRSR2}
\bigskip

\end{document}